\documentclass[11pt]{article}
\usepackage{amsmath,color,amsfonts,amsthm}
\usepackage{amssymb,enumerate,verbatim}
\usepackage{hyperref}

\usepackage{geometry}
\geometry{a4paper, margin=2.1cm}

\usepackage{graphicx}
\newtheorem{lemma}{Lemma}

\newtheorem{claim}[lemma]{Claim}
\newtheorem{theorem}[lemma]{Theorem}

\newtheorem{definition}[lemma]{Definition}
\newtheorem{conjecture}[lemma]{Conjecture}

\theoremstyle{definition}

\newcommand{\Remark}[1]{}

\title{The rough structure of biclaw-free bipartite graphs}

\date{}

\begin{document}

\title{Hamiltonicity and structure of connected biclaw-free graphs}
\author{
	Alexey Pokrovskiy \thanks{
	Department of Mathematics, 
	University College London.
	Email: \texttt{a.pokrovskiy}@\texttt{ucl.ac.uk}.
	}
	\and
	Xiaoan Yang \thanks{
	School of Mathematical Sciences, Queen Mary University of London.
	Email: \texttt{xiaoan.yang}@\texttt{qmul.ac.uk}.
	The authors thank the UCL Mathematics Department for financially supporting the second author's undergraduate summer project.}
}

\date{}

\maketitle

\begin{abstract}
We show that for sufficiently large $d$, every balanced bipartite, connected biclaw-free graph with minimum degree $\geq d$ is Hamiltonian. This confirms a conjecture of Flandrin, Fouquet, and Li.
\end{abstract}

\section{Introduction}
A Hamiltonian cycle in a graph is a cycle which passes through every vertex. They were introduced by the great Irish mathematician William Rowan Hamilton, and have been extensively studied since then. Generally we are interested in proving that all graphs with some properties contain a Hamiltonian cycle. The most well known theorem like this is Dirac's theorem which says that every $G$ with $\delta(G)\geq |V(G)|/2$ has a Hamiltonian cycle. Here $\delta(G)$ denotes the minimum degree of $G$ i.e. the minimum number of edges that pass through a vertex in $G$.

Dirac's Theorem has a huge number variations and generalizations (see e.g. the survey(s)~\cite{faudree1997claw}). However many of these generalizations have a limitation in that they are about \emph{dense} graphs i.e. about graphs with lots of edges. For sparse graphs, we still have a relatively incomplete understanding of Hamiltonicity. There are various open problems about this. One of the most famous is the following conjecture.
\begin{conjecture}[Matthews, Sumner, \cite{matthews1985longest}]\label{Conjecture_Matthews_sumner}
Every $4$-connected, claw-free graph has a Hamiltonian cycle.
\end{conjecture}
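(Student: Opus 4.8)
The Matthews--Sumner conjecture is one of the best-known open problems on Hamiltonicity of sparse graphs, so rather than a complete proof I will describe the standard line of attack and indicate where it stalls. The first step is Ryj\'a\v{c}ek's closure operation: repeatedly, whenever a vertex $v$ has a connected neighbourhood, add all edges inside $N(v)$ so that $N(v)$ becomes a clique. This keeps the graph claw-free and $4$-connected and does not change its circumference, so it suffices to prove the conjecture for the closure $\mathrm{cl}(G)$; and Ryj\'a\v{c}ek's theorem says $\mathrm{cl}(G)$ is always the line graph $L(H)$ of some triangle-free graph $H$. The conjecture therefore reduces to Thomassen's conjecture that every $4$-connected line graph is Hamiltonian, and the plan is to attack this line-graph reformulation directly.

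Translating back from $L(H)$ to $H$, a Hamiltonian cycle in $L(H)$ corresponds to a \emph{dominating closed trail} in $H$ --- a closed walk using no edge twice that meets every edge of $H$ in at least one endpoint --- while the hypothesis $\kappa(L(H)) \geq 4$ forces $H$ to be \emph{essentially $4$-edge-connected}: every edge cut of size at most $3$ has one side consisting of a single vertex. So the target becomes: every essentially $4$-edge-connected graph has a dominating closed trail. Here one invokes Catlin's theory of collapsible graphs --- contract each maximal collapsible subgraph of $H$ to obtain a reduced graph $H'$, note that $H$ has a dominating closed trail whenever $H'$ has a spanning Eulerian subgraph, and try to verify the latter. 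For genuinely $4$-edge-connected graphs this is already a theorem of Jaeger (every $4$-edge-connected graph has a spanning Eulerian subgraph), so all the difficulty is concentrated in the $3$-edge-cuts permitted by ``essential'' connectivity --- equivalently, in the vertices of $L(H)$ whose neighbourhood is a union of two overlapping cliques.

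The main obstacle is exactly this last point: pushing the connectivity threshold down to $4$. The strongest unconditional results --- Zhan's theorem that $7$-connected line graphs are Hamiltonian and the Kaiser--Vr\'ana improvement to $5$-connected line graphs of minimum degree at least $6$ --- both proceed by showing the relevant reduced graph is small or dense enough to be analysed by hand, and $4$ lies just beyond what those arguments can control. A proof of Matthews--Sumner would seem to require either a genuinely new reduction operation that tames $3$-edge-cuts near the essential-connectivity boundary, or a direct structural classification of the essentially $4$-edge-connected graphs with no dominating closed trail. Since the present paper instead establishes a bipartite biclaw-free analogue of the Flandrin--Fouquet--Li conjecture, I expect this conjecture appears here only as motivating context, and that the structural machinery the paper develops for the bipartite setting --- rather than the classical closure/collapsible route --- is what powers its actual main theorem.
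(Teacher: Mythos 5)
You have correctly identified that this statement is an open conjecture, not a result the paper proves: the paper cites Matthews--Sumner purely as motivating background for the bipartite Flandrin--Fouquet--Li variant it actually addresses, and provides no proof of it. Your survey of the state of the art is accurate and careful --- the Ryj\'a\v{c}ek closure to line graphs of triangle-free graphs, the equivalence with Thomassen's conjecture, the reformulation as dominating closed trails in essentially $4$-edge-connected graphs, Catlin's collapsible reduction, Jaeger's theorem for genuine $4$-edge-connectivity, and the partial results of Zhan and Kaiser--Vr\'ana are all correctly placed, and you have honestly located where every known argument stalls. Your closing inference is also right: the paper's machinery is entirely different (Kov\'ari--S\'os--Tur\'an-style density arguments showing $S_{t,t}$-free bipartite graphs with large minimum degree are nearly complete bipartite, followed by Moon--Moser), and none of the closure/collapsible line-graph technology you describe plays any role in it. Nothing to correct here; this is exactly the right reading of the statement's role in the paper.
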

Here ``claw-free'' means that there is no induced claw (a graph with vertices $v,x,y,z$ and edges $vx,vy,vz$). There are a variety of results known about this conjecture --- see the survey~\cite{faudree1997claw} or the book~\cite{li2008hamiltonian}. 

This paper will be about a bipartite variant of the Matthews-Sumner Conjecture. Every bipartite graph with $\Delta(G)\geq 3$ contains an induced claw, so clearly ``claw-free'' needs to be changed to something else in order to make a meaningful variant. Flandrin, Fouquet, Li defined a biclaw to be the graph in Figure~\ref{Figure_biclaw}. A graph $G$ is  ``induced $H$-free'' if it contains no induced subgraphs isomorphic to $H$.
\begin{figure} [h]\label{Figure_biclaw}
  \centering
     \includegraphics[width=0.3\textwidth]{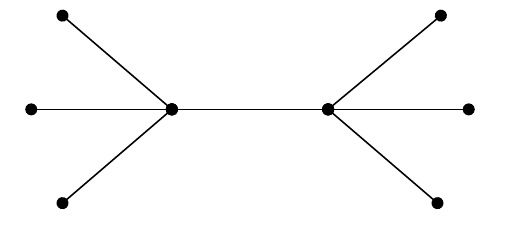}
     \caption{A biclaw $S_{3,3}$.}
\end{figure} 

They posed the following variant of Conjecture~\ref{Conjecture_Matthews_sumner}. 
\begin{conjecture}[Flandrin, Fouquet, Li, \cite{flandrin1994hamiltonicity}]\label{conj_main}
There is a constant $d\in \mathbb{N}$, such that every connected, induced biclaw-free, balanced bipartite graph with $\delta(G)\geq d$ has a Hamiltonian cycle.
\end{conjecture}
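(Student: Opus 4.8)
The plan is to first extract a strong \emph{local} consequence of biclaw-freeness, then bootstrap it into a rough global structure theorem (this will be the hard part), and finally assemble the Hamiltonian cycle by routing through the resulting ``bags''. For the first step, observe that for an edge $uv$ of a bipartite graph, an induced biclaw with centres $u,v$ is exactly a choice of three vertices in $N(u)\setminus\{v\}$ and three in $N(v)\setminus\{u\}$ spanning no edge between the two triples (all remaining non-edges being automatic from the bipartition). So biclaw-freeness says precisely that, for \emph{every} edge $uv$, the bipartite complement of the graph induced between $N(u)\setminus\{v\}$ and $N(v)\setminus\{u\}$ contains no $K_{3,3}$; by the K\H{o}v\'ari--S\'os--Tur\'an theorem this complement then has only $O\!\big(|N(u)|\,|N(v)|^{2/3}+|N(u)|+|N(v)|\big)$ edges, a vanishing fraction of $|N(u)|\cdot|N(v)|$ since $\delta(G)\geq d$ with $d$ large. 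Thus for every edge $uv$ the ``interface'' between $N(u)\setminus\{v\}$ and $N(v)\setminus\{u\}$ is an \emph{almost-complete} bipartite graph.

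\emph{The rough structure.} If $\delta(G)\geq\varepsilon|V(G)|$ then $G$ is Hamiltonian by the Moon--Moser theorem, so assume some vertex $v_0$ has degree close to $d$. Writing $N(v_0)=\{u_1,\dots,u_k\}$ with $k=O(1)$, applying local density to each edge $v_0u_i$ shows that all but an $o(1)$-fraction of the second neighbourhood $N_2(v_0)$ is joined to almost all of $\{u_1,\dots,u_k\}$, so $\{v_0\}\cup N_2(v_0)$ and $N(v_0)$ behave like the two sides of an almost-complete bipartite graph. Iterating layer by layer produces a sequence of ``bags'' $V_1,V_2,V_3,\dots$ lying alternately on the two sides of $G$, each consecutive pair almost-completely joined and each non-consecutive pair essentially non-adjacent. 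Crucially this sequence terminates after boundedly many steps: a blow-up of a path on six or more bags always contains an induced biclaw (three vertices of one bag together with three vertices of a bag at distance three span no edge between them), and the minimum-degree hypothesis forces the bags near the ends to be large enough to realise it. Discarding a bounded exceptional set, and using that the non-edges inside each interface form a $K_{3,3}$-free (hence structurally restricted) graph, one obtains the rough structure theorem: \emph{either} $G$ is dense, \emph{or}, up to a bounded exceptional set, $V(G)$ is a union of a constant number of bags in a path-like pattern with consecutive bags almost-completely joined, the two end bags small and the interior bags large; and balancedness translates into an equality between the total sizes on the two sides.

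\emph{Assembling the cycle.} The dense case is immediate. In the structured case, the workhorse is a connecting-and-absorbing lemma: a sufficiently balanced almost-complete bipartite graph has a Hamiltonian path between essentially any prescribed pair of endpoints on the appropriate sides, because the $K_{3,3}$-free set of non-edges is too sparse to block a rotation-extension argument. Choosing one connecting vertex (or pair) inside each interface and applying this lemma within each bag-plus-interface, we stitch the constantly-many bags into one spanning cycle; the bounded exceptional set is absorbed by local surgery using the density around each exceptional vertex, and the two small end bags are handled by a short detour (for example, splitting the two sides of a thick interior $K_{t,t}$ into two subpaths that feed the two end vertices). Throughout one must keep the alternation between the sides consistent and consume exactly the prescribed number of vertices on each side; this is where balancedness is essential and where most of the bookkeeping lies.

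\emph{Main obstacle.} The real difficulty is the structure theorem: promoting ``every edge has an almost-complete interface'' into a clean bounded-depth bag decomposition with a bounded exceptional set, controlling the interplay between the genuine bag structure and the (possibly not-too-sparse) $K_{3,3}$-free sets of missing edges, and pinning down which bag-size patterns actually occur. Given the structure, the Hamiltonicity step, though delicate, is essentially a routing-and-parity exercise.
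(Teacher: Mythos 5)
Your opening observation — that an induced $S_{3,3}$ centred at an edge $uv$ is exactly a $K_{3,3}$ in the bipartite complement between $N(u)\setminus\{v\}$ and $N(v)\setminus\{u\}$, so K\H{o}v\'ari--S\'os--Tur\'an makes every such interface almost-complete — is exactly the engine the paper runs on, and your intuition that biclaw-freeness forbids a ``deep'' layered structure (three vertices of one layer and three of a layer at distance three span an independent pair of triples) is also the right one. But from there the proposal diverges, and there are two genuine gaps.

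First, the ``rough structure theorem'' you state as a dichotomy is a false dichotomy. The paper (Theorem~\ref{Theorem_dense}, via Lemmas~\ref{lem_bound_diameter}--\ref{lem_density}) shows that under the hypotheses the graph is \emph{always} dense: the diameter is at most $5$, and all but a bounded exceptional set of vertices have degree at least $(1-\epsilon)n$. Your ``path-like bag pattern with small end bags and large interior bags'' cannot actually occur as a separate case: a chain of three or more bags with the middle bag almost-complete to both neighbours already has $e(G)\approx n^2$, and a chain of four or more bags produces an induced biclaw by exactly the observation you give. So the alternative branch of your structure theorem is vacuous, and the ``routing through constantly many bags, stitching via interior $K_{t,t}$'s, absorbing the two small end bags'' machinery in your Hamiltonicity step is solving a problem that does not arise. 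Second, the iteration that is supposed to produce the bags is not carried out. Starting from a vertex $v_0$ of small degree, $N(v_0)$ is a bounded set, so ``almost all of $N_2(v_0)$ is joined to almost all of $N(v_0)$'' is not by itself meaningful structural information, and promoting the per-edge interface density into a statement about $N_i(v_0)$ versus $N_{i+1}(v_0)$ (both potentially unbounded) requires a gluing argument you never supply. The paper resolves exactly this by first bounding the diameter (Lemma~\ref{lem_bound_diameter}), and then proving the key quantitative fact — for \emph{every} pair $a,b$ in opposite parts, at most $C(t,\epsilon)$ vertices of $N(a)$ miss an $\epsilon$-fraction of $N(b)$ (Lemma~\ref{lem_S_bound_general}) — by a careful case analysis over distances $1,3,5$; that is the content your sketch skips.

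A smaller but real issue: you dismiss the dense case as ``immediate,'' citing Moon--Moser, but $e(G)\geq(1-\epsilon)n^2$ does not give $\delta(G)>n/2$; there can be a bounded set of vertices of degree as low as $d$. The paper handles this (Claims~\ref{Lemma_one_short_path}--\ref{lem_path}) by first threading a single short path through the exceptional vertices, applying Moon--Moser to the complement of that path, and then splicing — a much lighter mechanism than the bag-routing and parity bookkeeping you propose, and one you would still need even in your ``dense'' branch. (Also, at the start of your structure step, ``$\delta(G)\geq\varepsilon|V(G)|$ implies Hamiltonian by Moon--Moser'' is off by a factor: Moon--Moser needs $\delta>n/2$ where $n$ is the side size, not $\delta\geq\varepsilon\cdot 2n$ for small $\varepsilon$.) In short: the opening KST observation and the instinct to bound the ``depth'' via induced biclaws are both on target, but the dichotomy, the bag-gluing step, and the Hamiltonicity assembly all have gaps, and the correct conclusion of the structure step is simply that the graph is dense.
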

Here most of the assumptions (other than biclaw-free) are easily seen to be necessary for Hamiltonicity in one way or another: 
Both ``connected'' and ``balanced bipartite'' (that the parts of the bipartition of $G$ have the same size)  are necessary for Hamiltonicity since bipartite cycles are balanced, connected graphs. The conjecture wouldn't be true without the $\delta(G)\geq \delta$ assumption (e.g. because it would be satisfied by a single path which is not Hamiltonian) --- however Flandrin, Fouquet, Li remark that probably a small value of $\delta$ (like $\delta=6$) should suffice.
Define a $(a,b)$-biclaw, denoted $S_{a,b}$, to be the graph with $V(S_{a,b})=\{x, x_1, \dots, x_a, y, y_1, \dots, y_b\}$ and $E(S_{a,b})=\{xy\}\cup \{xy_1, \dots, x y_b, y x_1, \dots, y x_a\}$. Thus a biclaw in the sense of Flandrin-Fouquet-Li
is  $S_{3,3}$, in our notation. It is very natural to make a strenghtening of Conjecture~\ref{conj_main} where we ask for $G$ to be induced $S_{t,t}$-free rather than  biclaw-free (and letting $d$ depend on $t$).

There has been some progress on the conjecture  Flandrin, Fouquet, Li proved their conjecture assuming $\delta(G) > n/6 + O(1)$. Barraez,  Flandrin,   Li,  and Ordaz strengthened this to $\delta(G)> n/8+O(1)$ and also proved a variant involving dominating cycles instead of Hamiltonian cycles~\cite{barraez1995dominating}. 
Bai and Li~\cite{bai2023connected} proved that every induced $S_{1,3}$-free graph balanced connected bipartite graph  with $\delta(G)\geq 4$ is Hamiltonian.  Lai and Yao proved that biclaw-free graphs with $\delta(G)\geq 5$ have certain properties (called ``collapsible'' and ``supereulerian'')  related to, but weaker than, Hamiltonicity~\cite{lai2006collapsible}. 

In this paper, we prove Conjecture~\cite{flandrin1994hamiltonicity}, as well as well as the natural generalization of it to $(t,t)$-biclaws. We prove 
\begin{theorem}\label{Theorem_main}
For all $t\in \mathbb{N}$, there is a $d\in \mathbb{N}$, such that every connected, induced $S_{t,t}$-free, balanced bipartite graph with $\delta(G)\geq d$ has a Hamiltonian cycle.
\end{theorem}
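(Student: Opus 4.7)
The plan is to exploit induced $S_{t,t}$-freeness to obtain a strong \emph{local} density property, promote this to a global structural decomposition of $G$, and then build a Hamiltonian cycle from that structure. The local observation is the following: for any edge $xy \in E(G)$, set $X = N(y) \setminus \{x\}$ and $Y = N(x) \setminus \{y\}$. Absence of an induced $S_{t,t}$ with center edge $xy$ says there is no pair $X' \subseteq X$, $Y' \subseteq Y$ of size $t$ each spanning no edges between them. Equivalently, the bipartite complement of $G[X,Y]$ is $K_{t,t}$-free, so Kővári–Sós–Turán gives at most $O((|X|+|Y|)^{2-1/t})$ non-edges there. Hence once $d$ is large enough in terms of $t$, adjacent neighborhoods are almost completely joined.

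Using this, I would partition $V(G)$ into ``blobs'' $W_1,\dots,W_k$ together with a small exceptional set $R$, so that each blob spans a balanced, nearly complete bipartite graph. Concretely, seed a blob with an edge, iteratively absorb all vertices whose neighborhood agrees with the seed's on a $(1 - o(1))$-fraction, and stop when no further vertex qualifies; any vertex not absorbed goes into $R$. The local density above forces each blob to stay internally near-complete, and the $S_{t,t}$-freeness combined with a counting/Ramsey argument bounds $|R|$ by a constant depending only on $t$. Connectivity of $G$ and the overall bipartite balance then let one analyze the ``blob-graph'' (whose edges record inter-blob connections in $G$), which one wants to be connected and bipartite-compatible.

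The Hamiltonian cycle is then built blob-by-blob. If $k = 1$ we are inside one near-complete balanced bipartite graph and Dirac's theorem gives the cycle directly. Otherwise I would use an absorbing-path argument: reserve a short path $P_{\text{abs}}$ that can absorb any small subset of $R$; find a spanning balanced path in each blob (by Dirac applied to the near-complete blob); stitch the paths together via inter-blob edges guided by a spanning traversal of the blob-graph; and finally insert the vertices of $R$ through $P_{\text{abs}}$. The main obstacle will be the structural step: showing that blobs are balanced (so balanced spanning paths in them exist), that $R$ is small enough and flexibly connected enough to be absorbed, and that the blob-graph admits a parity-compatible spanning traversal. Parity mismatches between blobs, and the correct treatment of mixed-type vertices in $R$ whose neighborhoods spread across several blobs, will be the recurring delicate issue, likely handled by a robust absorption lemma tailored to the biclaw-free bipartite setting.
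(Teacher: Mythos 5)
Your opening observation is exactly the paper's starting point: for an edge $xy$, the bipartite complement of the graph between $N(x)$ and $N(y)$ is $K_{t,t}$-free, so K\H{o}v\'ari--S\'os--Tur\'an forces adjacent neighbourhoods to be almost completely joined. But from there the proposal has a genuine gap. The ``blob'' decomposition plus exceptional set $R$ plus blob-graph traversal is not what the hypotheses actually yield, and the steps you flag as ``the main obstacle'' (bounding $|R|$, balancing the blobs, finding a parity-compatible traversal) are precisely the parts for which no argument is given and for which the local KST lemma alone is insufficient. The local lemma controls only the neighbourhoods of \emph{adjacent} vertices; to show that your greedy absorption swallows almost the whole graph (equivalently, that $R$ is small and that there are not several blobs joined by a few stray edges) you must propagate the density property to pairs of vertices at distance $3$ and $5$ and bound the diameter. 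That propagation is the technical heart of the actual proof (its Lemmas on $S_{N(x_1)}^{N(x_4)}$ and $S_{N(x_1)}^{N(x_6)}$ and the diameter-$\le 5$ lemma), and the conclusion is much stronger than your framework anticipates: a connected, $S_{t,t}$-free, bipartite graph with large minimum degree has $e(G)\ge (1-\epsilon)n^2$, i.e.\ there is essentially \emph{one} blob containing all but $\epsilon n$ vertices of each side. Note that two near-complete blobs joined by one edge $uv$ immediately contain an induced $S_{t,t}$ centred at $uv$, so the multi-blob picture cannot occur --- but ruling it out is the work, not a side remark.

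Without that step, the remainder of your plan is also at risk on its own terms: if several blobs really could coexist, a Hamiltonian cycle would need each blob to be balanced and would need at least two vertex-disjoint connector edges per blob with compatible parities, none of which follows from connectivity and overall balance; and a ``robust absorption lemma tailored to the biclaw-free bipartite setting'' is invoked but not constructed. The paper avoids all of this: once almost every vertex has degree $\ge (1-\epsilon)n$, the handful of low-degree vertices are threaded onto one short path using short detours (each such vertex has two neighbours of near-full degree), and Moon--Moser's bipartite Dirac theorem finishes the rest. So the fix is to replace the blob/absorption architecture with a proof that the graph is globally near-complete, which requires the distance-$3$ and distance-$5$ neighbourhood lemmas you have not supplied.
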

Somewhat surprisingly, despite proving the conjecture, our proof approach shows that much of the intuition and motivation for the Flandrin-Fouquet-Li Conjecture is incorrect. The primary motivation for the conjecture is that it appears related to the Matthews-Sumner Conjecture, and specifically the area of Hamiltonicity of sparse graphs. However, the main step of our proof consists of proving that connected, induced $S_{t,t}$-free, bipartite graphs with $\delta(G)\geq d(t)$ must actually be \emph{extremely} dense. 

\begin{theorem}\label{Theorem_dense}
For all $t\in \mathbb{N}, \epsilon>0$, there is a $d\in \mathbb{N}$, such that every connected, induced $S_{t,t}$-free, bipartite graph with $\delta(G)\geq d$ and parts of order $n$ has $e(G)\geq (1-\epsilon)n^2$.
\end{theorem}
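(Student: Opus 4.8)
The plan is to exploit induced $S_{t,t}$-freeness as a very strong \emph{local} density condition and then to bootstrap it into global density using connectivity. For the local part, fix an edge $xy\in E(G)$ with $x\in A,\ y\in B$. If $X\subseteq N(y)\setminus\{x\}$ and $Y\subseteq N(x)\setminus\{y\}$ were $t$-sets with no edge between them, then $\{x,y\}\cup X\cup Y$ would induce an $S_{t,t}$; so the ``link'' $H_{xy}:=G[N(y)\setminus\{x\},\,N(x)\setminus\{y\}]$ contains no empty $t\times t$ rectangle, i.e.\ its bipartite complement is $K_{t,t}$-free. By K\H{o}v\'ari--S\'os--Tur\'an, a $K_{t,t}$-free bipartite graph with parts of sizes $p\le q$ has $O_t(pq^{1-1/t})$ edges, so $H_{xy}$ is missing at most $\gamma\cdot\deg(x)\deg(y)$ edges, where $\gamma=\gamma(d)\to 0$ as $d\to\infty$ (using $\delta(G)\ge d$). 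Equivalently: for every edge $xy$, all but $\le\sqrt{\gamma}\,\deg(x)$ vertices $b\in N(x)$ have $|N(b)\cap N(y)|\ge(1-\sqrt{\gamma})\deg(y)$, and symmetrically with $x,y$ swapped. Thus ``most'' vertices of one endpoint's neighbourhood have their own neighbourhood almost \emph{confined} to the other endpoint's neighbourhood, provided the two degrees are comparable.

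It suffices, after rescaling $\epsilon$, to produce $A_0\subseteq A$ and $B_0\subseteq B$ with $|A_0|,|B_0|\ge(1-\epsilon)n$ such that $G[A_0,B_0]$ misses at most $\epsilon n^2$ edges, since then $e(G)\ge|A_0||B_0|-\epsilon n^2\ge(1-3\epsilon)n^2$. To build such sets, let $u$ achieve $D:=\Delta(G)$, say $u\in A$; the heart of the matter is to show $D\ge(1-\epsilon)n$. Suppose not; then every vertex has $\ge\epsilon n$ non-neighbours on the opposite side, and in particular there is a vertex of $A$ having both a common neighbour with $u$ and a neighbour outside $N(u)$ (otherwise $N(u)\cup N(N(u))$ is closed under taking neighbours, hence a connected component, forcing $N(u)=B$ and $D=n$). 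Consider the cluster $K:=N(u)\cup\{a\in A:\ |N(a)\cap N(u)|\ge(1-\beta)D\}$ with $\beta=\beta(d)\to 0$. Applying the confinement statement to edges at $u$, and then to edges leaving $K$, one sees that $K$ is ``essentially closed'': few edges leave it, and a vertex reached one further step out still has its neighbourhood almost inside $K$. Since $G$ is connected and $K\ne V(G)$, tracing a crossing edge must eventually expose an edge $x'y'$ whose link has a large side sitting deep inside $K$ and, on the other side, $t$ vertices lying outside the cluster and anti-complete to that large side — an induced $S_{t,t}$, a contradiction. Hence $D\ge(1-\epsilon)n$; running the same propagation once more, now starting from a cluster that already has $\approx n$ vertices on each side, shows that all but $\le\epsilon n$ vertices of each part have degree $\ge(1-\epsilon)n$, and taking $B_0\subseteq N(u)$ and $A_0=\{a:|N(a)\cap B_0|\ge(1-\epsilon)|B_0|\}$ and bounding their sizes yields the $A_0,B_0$ required above.

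The main obstacle is precisely this bootstrapping: converting ``$G$ has a nearly-closed, bounded-size cluster'' into an honest induced $S_{t,t}$. The K\H{o}v\'ari--S\'os--Tur\'an error terms are of lower order, but they degrade additively each time one ``propagates'' along an edge, so one cannot afford an unbounded number of such steps; the clean fix is to first prove that $\mathrm{diam}(G)$ is bounded by a constant (itself a consequence of local density together with connectivity), and then to choose the crossing edge carefully so that one side of its link is genuinely large and deep inside the cluster while the other side still contains $t$ vertices anti-complete to it. Everything else — the K\H{o}v\'ari--S\'os--Tur\'an estimate, the confinement reformulation, and the final reduction — is routine.
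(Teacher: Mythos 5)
Your proposal shares the paper's core ingredient --- apply K\H{o}v\'ari--S\'os--Tur\'an to the link graph of an edge, reformulate the resulting density as ``most vertices of $N(x)$ have almost all of $N(y)$ as neighbours,'' and then bootstrap this local fact into global structure --- and you correctly identify that additive degradation of error terms makes a bounded-diameter lemma essential. However, there is a genuine gap at exactly the point you flag as ``the main obstacle.'' Your plan to prove $\Delta(G)\ge(1-\epsilon)n$ by tracing crossing edges out of a near-closed cluster $K$ until you ``expose an edge $x'y'$ whose link has a large side deep inside $K$ and, on the other side, $t$ vertices outside $K$ anti-complete to it'' is asserted, not argued. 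To produce an induced $S_{t,t}$ from an edge $x'y'$ you need $t$-subsets of $N(x')$ and of $N(y')$ with no edges between them, but the confinement/KST estimate says precisely the opposite: every such link is extremely dense, so there is no obvious way to manufacture the anti-complete $t\times t$ rectangle from the geometry of the cluster boundary. The paper never constructs an explicit $S_{t,t}$ at this stage. Instead it proves the ``$S$-set is small'' estimate for pairs at distance $1$, then $3$, then $5$ by short propagation chains (each producing a contradiction with the edge-case lemma rather than a literal $S_{t,t}$), deduces $\mathrm{diam}(G)\le 5$ from the distance-$3$ estimate, bounds the set $U_X(\epsilon)$ of low-degree vertices by double counting edges into it, shows any two high-degree vertices have large common neighbourhood, and finally gets $\Delta_X,\Delta_Y\ge(1-O(\epsilon))n$ by double counting $e(U_X^c(\epsilon),N(v))$ --- no cluster, no explicit biclaw.

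Also, the ``clean fix'' (bounded diameter follows from local density plus connectivity) is itself the nontrivial part: it requires exactly the distance-$3$ and distance-$5$ propagation lemmas you are trying to avoid writing down. In the paper these are Lemmas 5--6, and the diameter bound is Lemma 7, which uses the distance-$3$ estimate twice from a midpoint to shortcut a length-$6$ path. Your sketch acknowledges the obstacle but defers it rather than resolving it, so as written the argument does not close. One further minor point: your KST reformulation gives a bound of $\sqrt{\gamma}\deg(x)$ bad vertices (a fraction depending on the degree), whereas the paper's Lemma 2 gives an absolute constant $C_1(t,\epsilon)$ independent of the degrees; the latter is what makes the subsequent double-counting estimates (Lemmas 9--10) clean, and you would need to re-derive it in your framework.
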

Getting Hamiltonicity after establishing $e(G)\geq (1-\epsilon)n^2$ is routine, so most of the work in this paper is about proving Theorem~\ref{Theorem_dense}.  In this way, the paper is more about structural graph theory than Hamiltonicity --- the closest result to ours is probably by Scott, Seymour, and Spirkl~\cite{scott2023pure} who proved that for any forest $F$, bipartite induced $F$-free graphs either have $e(G)\geq (1-\epsilon)n^2$ or have  sets of order $\epsilon n$ in each part with no edges between them. 

Proving Theorem~\ref{Theorem_dense} comes down using  Kovari-S\'os-Tur\'an-style arguments to gradually establish more and more structure in $S_{t,t}$-free graphs. 
The proof goes in the following steps: In Section~\ref{Section_diameter} we show that such graphs have small diameter. Then in Section~\ref{Section_maxdeg}, we show that almost all vertices in such graphs have degree $\geq (1-\epsilon)n$. In Section~\ref{Section_Hamiltonian} we prove Hamiltonicity.

\section*{Establishing small diameter}\label{Section_diameter}
We'll use the following version of the Kovari-S\'os-Tur\'an Theorem.
\begin{theorem} [Kovari-S\'os-Tur\'an]\label{thm_KST}
    Let $G$ be a  bipartite graph with parts $A, B$ having no $K_{s,t}$. Then $e(G) < (s-1)^{1/t}(|B|-t+1)|A|^{1-1/t}+(t-1)|A|$.
\end{theorem}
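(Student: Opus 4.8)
The plan is to run the classical Kov\'ari--S\'os--Tur\'an double count, organising the last step so as to reach the sharp coefficient $|B|-t+1$ rather than $|B|$. Dispose first of degenerate cases: if $|B|\le t-1$ then $G$ has no $K_{s,t}$ at all and $e(G)\le|A|\,|B|\le(t-1)|A|$, which is at most the claimed bound; and if the average degree $\bar d:=e(G)/|A|$ satisfies $\bar d\le t-1$ then likewise $e(G)\le(t-1)|A|$. So assume from now on that $|B|\ge t$ and $\bar d>t-1$.

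Now count the pairs $(a,T)$ with $a\in A$, $T\in\binom{B}{t}$, and $T\subseteq N(a)$: summing over $a$ gives $\sum_{a\in A}\binom{d(a)}{t}$, while summing over $T$ gives $\sum_{T}\#\{a\in A: T\subseteq N(a)\}\le(s-1)\binom{|B|}{t}$, since $G$ has no $K_{s,t}$ with the size-$s$ part inside $A$, so no $t$-subset of $B$ has $s$ common neighbours in $A$. Thus $\sum_{a\in A}\binom{d(a)}{t}\le(s-1)\binom{|B|}{t}$. To lower-bound the left side, note that the function $\phi$ on $[0,\infty)$ equal to $\binom{x}{t}$ for $x\ge t-1$ and to $0$ for $x\le t-1$ is convex (on $[t-1,\infty)$ one checks $\frac{d^2}{dx^2}\binom{x}{t}\ge0$, and the two pieces glue convexly at $t-1$) and agrees with $\binom{d}{t}$ on all non-negative integers $d$, so Jensen's inequality gives $\sum_{a\in A}\binom{d(a)}{t}=\sum_{a\in A}\phi(d(a))\ge|A|\,\phi(\bar d)=|A|\binom{\bar d}{t}$. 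Combining,
\[
\binom{\bar d}{t}\ \le\ \frac{s-1}{|A|}\binom{|B|}{t}.
\]

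For the final step I would compare the two binomial coefficients factor by factor: $\binom{\bar d}{t}/\binom{|B|}{t}=\prod_{i=0}^{t-1}\frac{\bar d-i}{|B|-i}$, where every factor is non-negative since $0\le\bar d-t+1\le\bar d-i$ and $|B|-i>0$. Every vertex of $A$ has at most $|B|$ neighbours, so $\bar d\le|B|$, and then for each $0\le i\le t-1$ cross-multiplication gives $(\bar d-i)(|B|-t+1)-(\bar d-t+1)(|B|-i)=(t-1-i)(|B|-\bar d)\ge0$, i.e.\ $\frac{\bar d-i}{|B|-i}\ge\frac{\bar d-t+1}{|B|-t+1}$. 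Hence $\bigl(\frac{\bar d-t+1}{|B|-t+1}\bigr)^{t}\le\frac{s-1}{|A|}$; taking $t$-th roots, rearranging, and multiplying by $|A|$ yields $e(G)\le(t-1)|A|+(s-1)^{1/t}(|B|-t+1)|A|^{1-1/t}$, with strict inequality unless all of the above steps are simultaneously tight — which (when $s\le|A|$) forces $G=K_{|A|,|B|}$ and hence, since $|B|\ge t$, a copy of $K_{s,t}$, a contradiction; the remaining case $s>|A|$ is immediate.

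I expect the only genuinely delicate point to be the convexity input, since $\binom{x}{t}$ is not convex on all of $[0,\infty)$ — this is exactly why one truncates to $\phi$ (equivalently, reduces to the case $\bar d>t-1$ at the start). And the factor-by-factor comparison in the last step, in place of the cruder estimate $\binom{|B|}{t}\le|B|^{t}/t!$, is precisely what upgrades $|B|$ to $|B|-t+1$ in the conclusion; everything else is bookkeeping.
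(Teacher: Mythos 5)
The paper does not prove this theorem --- it cites it as a known result of K\H{o}v\'ari--S\'os--Tur\'an and applies it once (with a non-strict $\le$) in Lemma~\ref{lem_bihole_free} --- so there is no in-paper proof to compare against. Your argument is the standard KST double count, and the core of it is sound: the reduction to $|B|\ge t$, $\bar d>t-1$; the count of $(a,T)$ pairs giving $\sum_a\binom{d(a)}{t}\le(s-1)\binom{|B|}{t}$; Jensen applied to the truncated convex extension $\phi$ of $\binom{\cdot}{t}$ (your justification of convexity is correct --- on $[t-1,\infty)$ the polynomial $x(x-1)\cdots(x-t+1)$ is convex because the largest root of its second derivative lies strictly below $t-1$, and the glue at $t-1$ is convex since the right derivative is nonnegative); and the factor-by-factor comparison $\frac{\bar d-i}{|B|-i}\ge\frac{\bar d-t+1}{|B|-t+1}$, which is exactly the refinement that produces $|B|-t+1$ instead of $|B|$. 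All of this correctly yields the non-strict bound $e(G)\le(s-1)^{1/t}(|B|-t+1)|A|^{1-1/t}+(t-1)|A|$, which is all the paper actually uses.

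Where I would push back is the strictness discussion at the end. The claim that ``the remaining case $s>|A|$ is immediate'' is not: when $s-1=|A|$ and $|B|\ge t$, for instance, the proposed chain only yields equality, not a contradiction. More seriously, the strict form as stated in the paper is actually false in degenerate cases: take $|A|=|B|=1$, $s=2$, $t=3$, and $G$ the single edge; then $G$ trivially has no $K_{2,3}$, $e(G)=1$, yet $(s-1)^{1/t}(|B|-t+1)|A|^{1-1/t}+(t-1)|A|=(-1)+2=1$, so the bound $e(G)<1$ fails. Your own degenerate-case disposal at the start ($|B|\le t-1$ giving $e(G)\le(t-1)|A|$) also only produces $\le$, and one cannot always upgrade it to $<$ because the first summand of the bound can be negative there, as this example shows. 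So the honest conclusion of your argument is the $\le$ version, which is exactly what Lemma~\ref{lem_bihole_free} needs; I would simply state the inequality non-strictly (or impose mild side conditions such as $s\le|A|$, $t\le|B|$, $s,t\ge2$) rather than try to patch the tightness analysis.
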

We say that a bipartite graph is $\overline{K_{t,t}}$-free if any two sets of size $t$ on  opposite sides of the bipartition have an edge between them. This is equivalent to the bipartite complement of the graph being  $K_{t,t}$-free. The relevance for us is that if a graph is $S_{t,t}$-free then for any edge $uv$, the bipartite graph between $N(u)$ and $N(v)$ is $\overline{K_{t,t}}$-free. 
\begin{lemma}\label{lem_bihole_free}
There is a function $C_1(t, \epsilon)$ such that the following holds for $t\in \mathbb{N}, \epsilon\in (0,1)$:  
Let $G$ be bipartite with parts $A,B$ which is $\overline{K_{t,t}}$-free having $|B|\geq \epsilon^{-1}t$. Then $A$ contains $\le C_1(t, \epsilon)$ vertices of degree $\le (1-\epsilon)|B|$. 
\end{lemma}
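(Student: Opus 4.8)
The plan is to suppose for contradiction that $A$ contains a large set $A_0$ of vertices each of degree $\le (1-\epsilon)|B|$, and to extract from $A_0$ a copy of $K_{t,t}$ in the bipartite complement of $G$, contradicting $\overline{K_{t,t}}$-freeness. Concretely, for each $v\in A_0$ let $\overline{N}(v)=B\setminus N(v)$ be its non-neighbourhood in $B$; the hypothesis says $|\overline{N}(v)|\ge \epsilon|B|$ for every $v\in A_0$. I want to find $t$ vertices $v_1,\dots,v_t\in A_0$ whose non-neighbourhoods have a common intersection of size $\ge t$: such a configuration is precisely a $K_{t,t}$ in the bipartite complement, which is forbidden. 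So the task reduces to a sunflower/counting statement: if each of many sets $\overline{N}(v)\subseteq B$ has size $\ge\epsilon|B|$, then $t$ of them have a common intersection of size $\ge t$.

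The cleanest route to that is a double-counting (a KST-flavoured argument, matching the section's theme): count pairs $(v, T)$ where $v\in A_0$ and $T\in\binom{\overline{N}(v)}{t}$ is a $t$-subset of its non-neighbourhood. The number of such pairs is at least $|A_0|\binom{\epsilon|B|}{t}$, using $|B|\ge\epsilon^{-1}t$ so that $\epsilon|B|\ge t$ and the binomial coefficient is well-defined and bounded below by roughly $(\epsilon|B|/t)^t$ up to constants. On the other hand, each fixed $t$-subset $T\in\binom{B}{t}$ is counted once for each $v\in A_0$ with $T\subseteq \overline{N}(v)$; if no such $T$ is covered by $t$ or more vertices $v$, then the total count is at most $(t-1)\binom{|B|}{t}$. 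Comparing the two bounds gives
\[
|A_0|\binom{\epsilon|B|}{t}\le (t-1)\binom{|B|}{t},
\]
which forces $|A_0|\le (t-1)\binom{|B|}{t}/\binom{\epsilon|B|}{t}$. The point is that the ratio $\binom{|B|}{t}/\binom{\epsilon|B|}{t}$ is bounded by a function of $t$ and $\epsilon$ only — roughly $(e/\epsilon)^t\cdot(\text{lower-order factors})$ — independent of $|B|$. Hence $|A_0|\le C_1(t,\epsilon)$ for a suitable $C_1$, which is exactly the claim once we note that if some $t$-set $T$ were covered by $\ge t$ vertices of $A_0$ we would immediately get the forbidden $K_{t,t}$ in the bipartite complement.

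To make the two binomial bounds line up cleanly I would not work with $\epsilon|B|$ directly but with $\lceil\epsilon|B|\rceil\ge\epsilon|B|$ (so $\overline N(v)$ has a $t$-subset whenever $\epsilon|B|\ge t$, guaranteed by $|B|\ge\epsilon^{-1}t$), and use the elementary estimate $\binom{m}{t}\ge (m-t+1)^t/t!$ together with $\binom{|B|}{t}\le |B|^t/t!$, so that the ratio is at most $\bigl(|B|/(\epsilon|B|-t+1)\bigr)^t\le \bigl(2/\epsilon\bigr)^t$ once $|B|$ is large enough in terms of $t,\epsilon$ (and for the finitely many small $|B|$ with $|B|\ge\epsilon^{-1}t$ one can absorb everything into the constant). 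Thus $C_1(t,\epsilon)=(t-1)(2/\epsilon)^t$ works up to these bookkeeping adjustments.

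The main obstacle is not conceptual but in the bookkeeping of the binomial inequalities: one must ensure $\epsilon|B|\ge t$ so the relevant $t$-subsets exist (this is where $|B|\ge\epsilon^{-1}t$ is used), and one must be slightly careful that the bound on the ratio of binomial coefficients is genuinely uniform in $|B|$ — the naive bound degrades when $\epsilon|B|$ is close to $t$, so the constant $C_1$ has to be chosen to dominate that regime as well. Everything else is a routine double count, and the contrapositive of "no $t$-set is $t$-covered" is exactly the $\overline{K_{t,t}}$ we are forbidden to have.
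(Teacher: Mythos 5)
Your proposal is correct, and the underlying idea is the same as the paper's: pass to the bipartite complement, where $\overline{K_{t,t}}$-freeness becomes $K_{t,t}$-freeness, and use the large minimum degree of low-degree vertices in the complement to force a contradiction with a K\H{o}v\'ari--S\'os--Tur\'an-style density bound. The difference is purely in packaging: the paper explicitly forms the complement graph $H$, lower-bounds $e(H)$ by $\epsilon|A'||B|$, and then cites Theorem~\ref{thm_KST} as a black box to upper-bound $e(H)$, whereas you inline the standard proof of that theorem by double-counting pairs $(v,T)$ with $T$ a $t$-subset of $\overline{N}(v)$. This makes your argument self-contained (you never need the statement of Theorem~\ref{thm_KST}), at the modest cost of the binomial bookkeeping you flag --- verifying that the ratio $\binom{|B|}{t}/\binom{\lceil\epsilon|B|\rceil}{t}$ is uniformly bounded in terms of $t,\epsilon$, including the boundary regime $\epsilon|B|$ close to $t$ where the crude $(2/\epsilon)^t$ estimate degrades to something like $(2t/\epsilon)^t$; you correctly note that this still gives a bound depending only on $t$ and $\epsilon$, so $C_1$ can absorb it. Both routes give constants of the same shape, $(c(t)/\epsilon)^t$, and either serves the rest of the paper equally well.
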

\begin{proof}
Let $C_1(t, \epsilon)=(\epsilon^{-1}(t-1)^{1/t})^{t}$. 
Let $A'$ be the set of vertices in $A$  of  degree $\le (1-\epsilon)|B|$,  and let $H$ be the bipartite complement of $G$ between $A'$ and $B$. We have that $\delta_H(A')\geq \epsilon|B|$ giving $e(G)\geq \epsilon|A'||B|$. Also  $H$ is $K_{t,t}$-free, so Theorem~\ref{thm_KST} gives $e(H)\le (t-1)^{1/t}(|B|-t+1)|A'|^{1-1/t}+(t-1)|A'|\le (t-1)^{1/t}|B||A'|^{1-1/t}+\epsilon|B||A'|/2$. Combining gives $\epsilon|B||A'|/2\le (t-1)^{1/t}|B||A'|^{1-1/t}$ and hence $|A'|\le (2\epsilon^{-1}(t-1)^{1/t})^{t}$.
\end{proof}

Many of our proofs come down to studying the following sets.
\begin{definition}
Let $X$ and $Y$ be two sets of vertices and $\varepsilon \in (0,1)$, define \[S_X^Y (\varepsilon) = \{x\in X \mid d_Y(x)\leq (1-\varepsilon)|Y|\}\]
\end{definition}

Negating this definition gives the following property which we'll use repeatedly.  
$$\text{for } x\in X\setminus S_X^Y (\varepsilon) \text{ we have } N(x)> (1-\epsilon)|Y|$$
We often write ``using the definition of $S_{\ast}^{\ast}$'' to mean that we are using the above property.

The goal of this section is to prove that for any $a,b$ in opposite parts of $G$, the set $S_{N(a)}^{N(b)}(\epsilon)$ is small (see Lemma~\ref{lem_S_bound_general}). This is done in several steps, depending on the how far $a$ and $b$ are from each other. First $d(u,v)=1$.
\begin{lemma}\label{lem_S_AB}
There is a function $C_2(t, \epsilon)$ such that the following holds for $t\in \mathbb{N}, \epsilon\in (0,1)$:  Let $G$ be a bipartite graph which is induced $S_{t,t}$-free. For every edge $ab$, and $A\subseteq N(a), B\subseteq N(b)$ with $|B|\geq C_2(t, \epsilon)$, we have $S_{B}^{A}(\epsilon)\le C_2(t, \epsilon)$.
\end{lemma}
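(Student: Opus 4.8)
The plan is to deduce the statement quickly from Lemma~\ref{lem_bihole_free}, once we observe that $S_{t,t}$-freeness forces the bipartite graph between $A$ and $B$ to be $\overline{K_{t,t}}$-free. Begin with bookkeeping: since $ab$ is an edge of the bipartite graph $G$, the vertices $a$ and $b$ lie in opposite parts, hence so do $A\subseteq N(a)$ and $B\subseteq N(b)$; moreover $N(a)\cap N(b)=\emptyset$ (no vertex of a bipartite graph is adjacent to both ends of an edge), so $A\cap B=\emptyset$. Thus there is a genuine bipartite graph $H$ with parts $A$ and $B$ and edge set $E_G(A,B)$, and $S_B^A(\epsilon)$ is exactly the set of vertices of $B$ of degree at most $(1-\epsilon)|A|$ in $H$.

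Next, the key claim: $H$ is $\overline{K_{t,t}}$-free. Suppose not, so there are $X\subseteq A$ and $Y\subseteq B$ with $|X|=|Y|=t$ and no edge of $G$ between $X$ and $Y$; taking $C_2(t,\epsilon)$ large enough that the size hypothesis forces $|A|\geq t+1$ and $|B|\geq t+1$, we may also assume $b\notin X$ and $a\notin Y$ (swap in an extra vertex of $A$ or $B$ if needed). Then $\{a,b\}\cup X\cup Y$ consists of $2t+2$ distinct vertices, and $G$ restricted to it has exactly the edges $ab$, all of $\{ax:x\in X\}$ (as $X\subseteq N(a)$) and all of $\{by:y\in Y\}$ (as $Y\subseteq N(b)$): there is no edge inside $X$ or inside $Y$ (wrong parts), none between $X$ and $Y$ (by assumption), and $a$ is non-adjacent to $Y$ and $b$ to $X$ (wrong parts). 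So this induced subgraph is an $S_{t,t}$ --- a contradiction. This is really just the observation preceding Lemma~\ref{lem_bihole_free}, applied with $u=a$, $v=b$ and then restricted to the subsets $A$, $B$.

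Finally, apply Lemma~\ref{lem_bihole_free} to $H$. We want to bound the number of vertices of $B$ of $H$-degree at most $(1-\epsilon)|A|$, so we use the lemma with $A$ in the role of its ``$B$'' and $B$ in the role of its ``$A$''; this requires $|A|\geq \epsilon^{-1}t$. Hence the size bound that matters is a lower bound on the neighbourhood $A$ appearing in the threshold $(1-\epsilon)|A|$, and we take $C_2(t,\epsilon)=\max\{C_1(t,\epsilon),\lceil\epsilon^{-1}t\rceil,t+1\}$, reading the size hypothesis as controlling this part. Lemma~\ref{lem_bihole_free} then gives $|S_B^A(\epsilon)|\leq C_1(t,\epsilon)\leq C_2(t,\epsilon)$, as required.

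I do not expect a serious obstacle: this is the base case $d(a,b)=1$, and all the quantitative content (the Kov\'ari--S\'os--Tur\'an estimate of Theorem~\ref{thm_KST}) is already packaged inside Lemma~\ref{lem_bihole_free}. The two points needing care are (i) ensuring the size hypothesis controls the part ($A$) that sits in the degree threshold $(1-\epsilon)|A|$, since Lemma~\ref{lem_bihole_free} is vacuous when the target set is tiny, and (ii) the harmless edge cases in which $a$ or $b$ might lie in $B$ or $A$, handled by taking $C_2\geq t+1$.
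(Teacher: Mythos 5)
Your proof is correct and follows essentially the same route as the paper: pass to the bipartite graph $H$ between $A$ and $B$, observe that $S_{t,t}$-freeness of $G$ forces the bipartite complement of $H$ to be biclique-free, and then invoke Lemma~\ref{lem_bihole_free}. Three remarks are worth making. First, you have correctly spotted a genuine inconsistency in the statement as written: with the hypothesis $|B|\geq C_2$, the conclusion $S_B^A(\epsilon)\leq C_2$ cannot be right (if $|A|$ is tiny, $S_B^A$ can be all of $B$), and indeed the paper's own proof silently switches and bounds $S_A^B(\epsilon)$ instead. Your fix --- keeping the conclusion $S_B^A$ and reading the size hypothesis as a lower bound on $|A|$ --- is the symmetric reformulation of the paper's, and either reading is compatible with all downstream applications. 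Second, you prove a marginally sharper claim than the paper: you show $H$ is $\overline{K_{t,t}}$-free, whereas the paper only extracts $\overline{K_{t+1,t+1}}$-freeness (and then prunes $a,b$ from a putative hole). Your improvement comes from noticing that $b$ cannot lie in $X\subseteq A\subseteq N(a)$ because $b$ is adjacent to every vertex of $B\supseteq Y$, contradicting the no-edge condition, and symmetrically $a\notin Y$; this makes $b\notin X$, $a\notin Y$ automatic. Third, and this is the one place where the write-up is slightly off: having made that observation, the phrase ``swap in an extra vertex of $A$ or $B$ if needed'' should be dropped, since as stated it is not justified (replacing a vertex of $X$ by an arbitrary new vertex of $A$ could destroy the no-edge property between $X$ and $Y$). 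Fortunately no swap is ever needed, so the argument stands; just replace that parenthetical with the direct adjacency observation, and then the extra $t+1$ term in your $C_2$ is also unnecessary.
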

\begin{proof}
Let $C_2(t, \epsilon)=\max(\epsilon^{-1}(t+1), C_1(t+1, \epsilon))$. 
Let $H$ be the subgraph of $G$ between $A$ and $B$. Since $G$ is $S_{t,t}$-free, we know that $H$ is $\overline{K_{t+1,t+1}}$-free (a $\overline{K_{t+1,t+1}}$ in $H$ gives a $\overline{K_{t,t}}$ avoiding the vertices $a,b$. This $\overline{K_{t,t}}$ together with $a,b$ gives an induced $S_{t,t}$). Note $|B|\geq C_2(t, \epsilon)\geq \epsilon^{-1}(t+1)$. Therefore Lemma~\ref{lem_bihole_free} applies in order to show that $A$  has $\le C_1(t+1, \epsilon)\le C_2(t, \epsilon)$ vertices with  $\le (1-\epsilon)|B|$ neighbours in $B$, which is equivalent to $S_{A}^{B}(\epsilon)\le C_2(t, \epsilon)$.
\end{proof}

We isolate the following special case.
\begin{lemma}
There is a function $C_2(t, \epsilon)$ such that the following holds: Let $G$ be a bipartite graph with parts $A,B$, with $\delta(G)\geq C_2(t, \epsilon)$, and which is induced $S_{t,t}$-free. For every edge $ab$ we have $S_{N(a)}^{N (b)}(\epsilon)\le C_2(t, \epsilon)$.
\end{lemma}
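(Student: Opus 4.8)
The plan is to obtain this immediately from Lemma~\ref{lem_S_AB}; the present statement is just that lemma applied with the sets $A$ and $B$ taken to be the full neighbourhoods of the two endpoints of the edge, and the point of isolating it is to record the exact form in which it will be used later. So I would simply fix an edge $ab$ of $G$ and first observe that, since $\delta(G)\geq C_2(t,\epsilon)$, both $|N(a)|$ and $|N(b)|$ are at least $C_2(t,\epsilon)$.

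Then I would apply Lemma~\ref{lem_S_AB} to the edge $ba$ (the same edge of $G$, with its endpoints relabelled), taking the sets of that lemma to be $A:=N(b)\subseteq N(b)$ and $B:=N(a)\subseteq N(a)$. The size hypothesis of Lemma~\ref{lem_S_AB} requires $|B|\geq C_2(t,\epsilon)$, which holds because $|B|=|N(a)|\geq \delta(G)\geq C_2(t,\epsilon)$. The conclusion of Lemma~\ref{lem_S_AB} is then $S_B^A(\epsilon)\leq C_2(t,\epsilon)$, which unwinds to exactly $S_{N(a)}^{N(b)}(\epsilon)\leq C_2(t,\epsilon)$. (One could equally apply Lemma~\ref{lem_S_AB} directly to $ab$ with $A=N(a)$, $B=N(b)$ to bound $S_{N(b)}^{N(a)}(\epsilon)$; since the edge is symmetric, either choice delivers the claim.)

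I do not expect any real obstacle here: the only thing being used beyond Lemma~\ref{lem_S_AB} is that the minimum-degree hypothesis promotes the assumption ``$|B|\geq C_2(t,\epsilon)$'' from a condition on a chosen subset to something automatic for $B=N(a)$. In particular the constant need not be enlarged --- I would reuse the very same function $C_2(t,\epsilon)$ from Lemma~\ref{lem_S_AB} both as the degree threshold and as the bound on $|S_{N(a)}^{N(b)}(\epsilon)|$.
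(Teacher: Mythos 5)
Your proof is correct and is essentially the same as the paper's one-line proof, which simply sets $A=N(a)$, $B=N(b)$ and invokes Lemma~\ref{lem_S_AB}, using $\delta(G)\geq C_2(t,\epsilon)$ to supply the size hypothesis on $B$ and reusing the same constant $C_2(t,\epsilon)$. (The only apparent difference --- whether you plug in $A=N(a),B=N(b)$ or $A=N(b),B=N(a)$ --- traces back to a superscript/subscript swap between the statement of Lemma~\ref{lem_S_AB}, which writes $S_B^A$, and its proof, which actually establishes $S_A^B$; under either reading the conclusion $S_{N(a)}^{N(b)}(\epsilon)\leq C_2(t,\epsilon)$ follows.)
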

\begin{proof}
Let $A=N(a), B=N(b)$ and apply Lemma~\ref{lem_S_AB}
\end{proof}
Next we prove a version of the above when $a$ and $b$ are distance $3$ from each other.
\begin{lemma}\label{lem_S_dist_3}
Let $0<\epsilon<1$. 
There is a function $C_3(t, \epsilon)$ such that the following holds: Let $G$ be a bipartite graph with parts $A,B$, with $\delta(G)\geq C_3(t, \epsilon)$, and which is induced $S_{t,t}$-free. For every length $3$ path $x_1x_2x_3x_4$ we have $S_{N(x_1)}^{N (x_4)}(\epsilon)\le C_3(t, \epsilon)$.
\end{lemma}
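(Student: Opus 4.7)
The plan is to argue by contradiction, exploiting the fact that Lemma~\ref{lem_S_AB}, applied with a singleton on one side, already gives a strong codegree bound for vertices at bipartite distance two. First, I would apply Lemma~\ref{lem_S_AB} to the edge $x_1 x_2$ with $a=x_2$, $b=x_1$, $A=\{x_3\}\subseteq N(x_2)$ and $B=N(x_1)$; this is legal since $|B|\ge\delta(G)\ge C_3\ge C_2(t,\epsilon)$. Because $|N(w)\cap\{x_3\}|\in\{0,1\}$ while $(1-\epsilon)|A|=1-\epsilon<1$, the conclusion $|S_B^A(\epsilon)|\le C_2(t,\epsilon)$ simply reads $|N(x_1)\setminus N(x_3)|\le C_2(t,\epsilon)$. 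The analogous application to the edge $x_3 x_4$ with $A=\{x_2\}$ gives $|N(x_4)\setminus N(x_2)|\le C_2(t,\epsilon)$.

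Next, suppose for contradiction that $|S_{N(x_1)}^{N(x_4)}(\epsilon)|>C_3(t,\epsilon)$, and write $S$ for this set. Every $u\in S$ has at least $\epsilon|N(x_4)|$ non-neighbours in $N(x_4)$, so the bipartite complement between $S$ and $N(x_4)$ has at least $\epsilon|N(x_4)||S|$ edges. Choosing $C_3$ large enough in terms of $t$, $\epsilon$, and $C_2(t,\epsilon)$, the Kov\'ari--S\'os--Tur\'an theorem (used exactly as in the proof of Lemma~\ref{lem_bihole_free}) produces sets $T\subseteq S$ and $T'\subseteq N(x_4)$ with $|T|=|T'|=s:=C_2(t,\epsilon)+t+1$ and no $G$-edges between $T$ and $T'$.

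Finally, the codegree bound yields $|T\cap N(x_3)|\ge|T|-|N(x_1)\setminus N(x_3)|\ge s-C_2(t,\epsilon)=t+1$, and symmetrically $|T'\cap N(x_2)|\ge t+1$. Picking $t$-subsets $T_0\subseteq T\cap N(x_3)$ and $T_0'\subseteq T'\cap N(x_2)$, the induced subgraph of $G$ on $T_0\cup T_0'\cup\{x_2,x_3\}$ contains exactly the edge $x_2 x_3$, the $t$ edges from $x_3$ to $T_0$ (present because $T_0\subseteq N(x_3)$), and the $t$ edges from $x_2$ to $T_0'$ (present because $T_0'\subseteq N(x_2)$); every other potential pair either lies within one bipartition class (for instance $T_0$--$x_2$ or $T_0'$--$x_3$) or straddles $T$ and $T'$, where no edges exist. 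Hence this induced subgraph is an $S_{t,t}$ centred at $\{x_2,x_3\}$, contradicting that $G$ is induced $S_{t,t}$-free. The only non-routine step is the singleton trick extracting the codegree bound; without it one would have to analyse a technically awkward case in which the KST-produced bi-independent pair lands inside $(N(x_1)\setminus N(x_3))\times(N(x_4)\setminus N(x_2))$ and no path-edge naturally provides the centres of a biclaw.
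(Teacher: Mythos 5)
Your first step does not work, and the rest of the argument depends on it essentially. You invoke Lemma~\ref{lem_S_AB} with $A=\{x_3\}$ and $B=N(x_1)$, checking only that $|B|\ge C_2(t,\epsilon)$. But the content of that lemma is a Kov\'ari--S\'os--Tur\'an count in which the set that must be large is the \emph{reference} set --- the one in the superscript of $S_{\ast}^{\ast}$, into which degrees are measured. (The printed statement of Lemma~\ref{lem_S_AB} has an $A$/$B$ slip: its proof establishes $|S_{A}^{B}(\epsilon)|\le C_2$ under the hypothesis $|B|\ge C_2$, and every subsequent application in the paper verifies that the superscript set is large. The literal reading you use, with a singleton reference set, is not available.) Indeed the conclusion you extract, $|N(x_1)\setminus N(x_3)|\le C_2(t,\epsilon)$ for every path $x_1x_2x_3$, is false for $t\ge 2$: take $K_{n,n}$ with parts $X,Y$ and delete all edges between a set $A'\subseteq X$ of size $t-1$ and a set $B'\subseteq Y$ of size $n/2$. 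This graph is connected, has minimum degree $n/2$, and is induced $S_{t,t}$-free (a biclaw needs two $t$-sets with no edges between them, so one of them would have to sit inside $A'$, which is too small), yet for $x_1\in X\setminus A'$, $x_2\in Y\setminus B'$, $x_3\in A'$ one has $|N(x_1)\setminus N(x_3)|=|B'|=n/2$.

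Without that constant codegree bound your construction collapses: the true statement available here is only that $N(x_1)\setminus N(x_3)$ has size $O(\epsilon n)$ (and even that is only established later, in Section~\ref{Section_maxdeg}), so the constant-size sets $T\subseteq S$ and $T'\subseteq N(x_4)$ produced by your KST step may lie entirely inside $N(x_1)\setminus N(x_3)$ and $N(x_4)\setminus N(x_2)$ --- exactly the ``awkward case'' you say you are avoiding, and the case the paper's argument is built to handle. The paper instead keeps every reference set large: it applies Lemma~\ref{lem_S_AB} to bound $S_{N(x_2)}^{S}$, $S_{N(x_2)}^{N(x_3)}$ and $S_{N(x_3)}^{N(x_4)}$, extracts an edge $vw$ with $v\in N(x_2)$ almost fully joined to $S$ and $w\in N(x_3)$ almost fully joined to $N(x_4)$, and then applies Lemma~\ref{lem_S_AB} once more to $vw$ to find a vertex of $S$ with degree $>(1-\epsilon)|N(x_4)|$ into $N(x_4)$, contradicting the definition of $S$. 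Your KST extraction and the biclaw centred at $x_2x_3$ are fine in themselves (modulo ensuring $x_2\notin T_0$ and $x_3\notin T_0'$), but they cannot be run without the false codegree input.
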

\begin{proof}
Let $C_3(t, \epsilon)=4C_2(t, \epsilon/4)$.
Let $S= S_{N(x_1)}^{N (x_4)}(\epsilon)\subseteq N(x_1)$, supposing for contradiction that $|S|\geq C_3(t, \epsilon)$.

 Note $|S|, |N(x_3)|, |N(x_4)|\geq C_2(t, \epsilon/4)$. 
 By Lemma~\ref{lem_S_AB} we have $S_{N(x_2)}^{S}(\epsilon/4), S_{N(x_2)}^{N(x_3)}(\epsilon/4), S_{N(x_3)}^{N(x_4)}(\epsilon/4)\le C_2(t, \epsilon/4)<C_3(t, \epsilon)/2\le  |N(x_2)|/2$. Since $|N(x_2)\setminus (S_{N(x_2)}^{S}(\epsilon/4)\cup S_{N(x_2)}^{N(x_3)}(\epsilon/4))|\geq |N(x_2)|-|S_{N(x_2)}^{S}(\epsilon/4)|-|S_{N(x_3)}^{N(x_4)}(\epsilon/4)|>0$, this gives that there's a vertex $v\in N(x_2)\setminus (S_{N(x_2)}^{S}(\epsilon/4)\cup S_{N(x_2)}^{N(x_3)}(\epsilon/4))$. This gives that $|N(v)\cap S|\geq (1-\epsilon/4)|S|$ and  $|N(v)\cap N(x_3)|\geq (1-\epsilon/4)|N(x_3)|\geq |N(x_3)|/2$.  Recalling $S_{N(x_3)}^{N(x_4)}(\epsilon/4)<N(x_3)/2$, this gives that there is a vertex $w\in N(v)\cap N(x_3)\setminus S_{N(x_3)}^{N(x_4)}(\epsilon/4)$.
 
Now let $A=N(v)\cap S$, $B=N(w)\cap N(x_4)$. Note $|A|\ge (1-\epsilon/4)|S|\geq |S|/2\geq  C_3(t, \epsilon)/2>C_2(t, \epsilon/4)$. Since $w\not\in  S_{N(x_3)}^{N(x_4)}(\epsilon/4)$, the definition of $S_{\ast}^{\ast}$ gives $|B|\geq (1-\epsilon/4)|N(x_4)|\ge |N(x_4)|/2\ge  C_3(t, \epsilon)/2 \geq C_2(t, \epsilon/4)$. Thus Lemma~\ref{lem_S_AB} applies to the edge $vw$ to give $|S_{A}^B(\epsilon/4)|\le C_2(t, \epsilon/4)<|A|$. This gives a vertex $a\in A\setminus  S_{A}^B(\epsilon/4)$ which, using the definition of $S_{\ast}^{\ast}$, must have $|N(a)\cap B|\geq (1-\epsilon/4)|B|\geq (1-\epsilon/4)^2|N(x_4)|> (1-\epsilon)|N(x_4)|$. But also $a\in A\subseteq  S=S_{N(x_1)}^{N (x_4)}(\epsilon)$ which gives  $|N(a)\cap B|\le  (1-\epsilon)|N(x_4)|$, which is a contradiction.
\end{proof}

Almost the same proof works for paths of length $5$.
\begin{lemma}\label{lem_S_dist_5}
Let $0<\epsilon<1$. 
There is a function $C_4(t, \epsilon)$ such that the following holds: Let $G$ be a bipartite graph with parts $A,B$, with $\delta(G)\geq C_4(t, \epsilon)$, and which is induced $S_{t,t}$-free. For every length $5$ path $x_1x_2x_3x_4x_5x_6$ we have $S_{N(x_1)}^{N (x_6)}(\epsilon)\le C_4(t, \epsilon)$.
\end{lemma}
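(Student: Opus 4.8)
The idea is to run the proof of Lemma~\ref{lem_S_dist_3} essentially verbatim, the one change being that the step which applied Lemma~\ref{lem_S_AB} to the edge $x_2x_3$ is now instead an application of Lemma~\ref{lem_S_dist_3} itself to the length-$3$ subpath $x_2x_3x_4x_5$. So I would fix $\epsilon'=\epsilon/4$, set $C_4(t,\epsilon)=4C_3(t,\epsilon/4)$ (comfortably larger than every constant appearing below), put $S=S_{N(x_1)}^{N(x_6)}(\epsilon)$, and suppose for contradiction that $|S|\ge C_4(t,\epsilon)$.

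First I would produce an auxiliary vertex $v$ playing the role of the ``near-$x_2$'' vertex in Lemma~\ref{lem_S_dist_3}. By Lemma~\ref{lem_S_AB} applied to the edge $x_1x_2$ with the subset $S\subseteq N(x_1)$, all but at most $C_2(t,\epsilon')$ vertices of $N(x_2)$ have more than $(1-\epsilon')|S|$ neighbours in $S$; and by Lemma~\ref{lem_S_dist_3} applied to the path $x_2x_3x_4x_5$ (whose hypothesis holds because $\delta(G)\ge C_4(t,\epsilon)\ge C_3(t,\epsilon')$), all but at most $C_3(t,\epsilon')$ vertices of $N(x_2)$ have more than $(1-\epsilon')|N(x_5)|$ neighbours in $N(x_5)$. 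Since $|N(x_2)|\ge\delta(G)$ exceeds the sum of these two counts, I can pick $v\in N(x_2)$ with both properties. Then $|N(v)\cap N(x_5)|\ge(1-\epsilon')|N(x_5)|\ge|N(x_5)|/2>C_2(t,\epsilon')$, so Lemma~\ref{lem_S_AB} on the edge $x_5x_6$ (which bounds $S_{N(x_5)}^{N(x_6)}(\epsilon')$ by $C_2(t,\epsilon')$) lets me pick $w\in N(v)\cap N(x_5)$ with more than $(1-\epsilon')|N(x_6)|$ neighbours in $N(x_6)$.

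Finally I would copy the endgame of Lemma~\ref{lem_S_dist_3}. Here $vw$ is an edge (the bipartition classes check out, $v\in N(x_2)$ and $w\in N(x_5)$ lying on opposite sides), the set $A:=N(v)\cap S\subseteq N(v)$ has $|A|\ge(1-\epsilon')|S|\ge|S|/2>C_2(t,\epsilon')$, and $B:=N(w)\cap N(x_6)\subseteq N(w)$ has $|B|\ge(1-\epsilon')|N(x_6)|$. So Lemma~\ref{lem_S_AB} applied to the edge $vw$ produces a vertex $a\in A\setminus S_A^B(\epsilon')$, which then satisfies $|N(a)\cap N(x_6)|\ge|N(a)\cap B|\ge(1-\epsilon')|B|\ge(1-\epsilon')^2|N(x_6)|>(1-\epsilon)|N(x_6)|$, using $(1-\epsilon/4)^2>1-\epsilon$. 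But $a\in A\subseteq S=S_{N(x_1)}^{N(x_6)}(\epsilon)$ forces $|N(a)\cap N(x_6)|\le(1-\epsilon)|N(x_6)|$, a contradiction.

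I expect no genuinely new ingredient here relative to Lemma~\ref{lem_S_dist_3}; the only thing that needs care is the bookkeeping of constants---checking that every set from which a vertex is selected stays above the threshold $C_2(t,\epsilon')$ needed to invoke Lemma~\ref{lem_S_AB}, and that $\epsilon'=\epsilon/4$ leaves enough room for the two multiplicative $(1-\epsilon')$-losses incurred in passing from $S$ to $N(x_6)$. The same recursion evidently bootstraps: the statement for a path of length $2k+1$ follows by replacing the use of Lemma~\ref{lem_S_dist_3} above with its length-$(2k-1)$ analogue, which is presumably how one obtains a bound on $S_{N(a)}^{N(b)}(\epsilon)$ valid for all $a,b$ in opposite parts.
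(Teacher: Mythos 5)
Your proof is correct and matches the paper's argument essentially line for line: apply Lemma~\ref{lem_S_AB} to the edge $x_1x_2$ and Lemma~\ref{lem_S_dist_3} to the path $x_2x_3x_4x_5$ to find $v\in N(x_2)$ seeing almost all of $S$ and of $N(x_5)$, then use Lemma~\ref{lem_S_AB} on $x_5x_6$ to find $w\in N(v)\cap N(x_5)$ seeing almost all of $N(x_6)$, and finally apply Lemma~\ref{lem_S_AB} to the edge $vw$ to derive a contradiction with $a\in S$. The only difference from the paper is the constant: the paper takes $C_4=\max(4C_3(t,\epsilon/4),4C_2(t,\epsilon/4))$ while you take $4C_3(t,\epsilon/4)$, which is equivalent since $C_3\ge C_2$.
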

\begin{proof}
Let $C_4(t, \epsilon)=\max(4C_3(t, \epsilon/4), 4C_2(t, \epsilon/4))$.
Let $S= S_{N(x_1)}^{N (x_4)}(\epsilon)\subseteq N(x_1)$, supposing for contradiction that $|S|\geq C_4(t, \epsilon)$.

 Note $|S|, |N(x_3)|, |N(x_4)|\geq C_2(t, \epsilon/4), C_3(t, \epsilon/4)$. 
 By Lemma~\ref{lem_S_AB} we have $S_{N(x_2)}^{S}(\epsilon/4),  S_{N(x_5)}^{N(x_6)}(\epsilon/4)\le C_2(t, \epsilon/4)<C_4(t, \epsilon)/2\le  |N(x_2)|/2$, and by Lemma~\ref{lem_S_dist_3}, we have $S_{N(x_2)}^{N(x_5)}(\epsilon/4)\le C_3(t, \epsilon/4)<C_4(t, \epsilon)/2\le  |N(x_2)|/2$. Since $|N(x_2)\setminus (S_{N(x_2)}^{S}(\epsilon/4)\cup S_{N(x_2)}^{N(x_3)}(\epsilon/4))|\geq |N(x_2)|-|S_{N(x_2)}^{S}(\epsilon/4)|-|S_{N(x_3)}^{N(x_4)}(\epsilon/4)|>0$, this gives that there's a vertex $v\in N(x_2)\setminus (S_{N(x_2)}^{S}(\epsilon/4)\cup S_{N(x_2)}^{N(x_5)}(\epsilon/4))$. This gives that $|N(v)\cap S|\geq (1-\epsilon/4)|S|$ and  $|N(v)\cap N(x_5)|\geq (1-\epsilon/4)|N(x_5)|\geq |N(x_5)|/2$.  Recalling $S_{N(x_5)}^{N(x_6)}(\epsilon/4)<N(x_5)/2$, this gives that there is a vertex $w\in N(v)\cap N(x_5)\setminus S_{N(x_5)}^{N(x_6)}(\epsilon/4)$.
 
Now let $A=N(v)\cap S$, $B=N(w)\cap N(x_6)$. Note $|A|\ge (1-\epsilon/4)|S|\geq |S|/2\geq  C_4(t, \epsilon)/2>C_2(t, \epsilon/4)$. Since $w\not\in  S_{N(x_5)}^{N(x_6)}(\epsilon/4)$, the definition of $S_{\ast}^{\ast}$ gives $|B|\geq (1-\epsilon/4)|N(x_6)|\ge |N(x_6)|/2\ge  C_4(t, \epsilon)/2 \geq C_2(t, \epsilon/4)$. Thus Lemma~\ref{lem_S_AB} applies to the edge $vw$ to give $|S_{A}^B(\epsilon/4)|\le C_2(t, \epsilon/4)<|A|$. This gives a vertex $a\in A\setminus  S_{A}^B(\epsilon/4)$ which, using the definition of $S_{\ast}^{\ast}$, must have $|N(a)\cap B|\geq (1-\epsilon/4)|B|\geq (1-\epsilon/4)^2|N(x_4)|> (1-\epsilon)|N(x_4)|$. But also $a\in A\subseteq  S=S_{N(x_1)}^{N (x_6)}(\epsilon)$ which gives  $|N(a)\cap B|\le  (1-\epsilon)|N(x_6)|$, which is a contradiction.
\end{proof}

Finally, we are ready to bound the diameter of biclaw-free graphs.
\begin{lemma}\label{lem_bound_diameter}
There is a function $C_5(t)$ such that the following holds: Let $G$ be a bipartite connected graph  with $\delta(G)\geq C_5(t)$, and which is induced $S_{t,t}$-free. Then $G$ has diameter $\le 5$.
\end{lemma}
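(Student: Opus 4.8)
The plan is to show that if $G$ has diameter $\geq 6$ then we can find two vertices $u, v$ in opposite parts whose neighbourhoods are essentially disjoint, and then derive a contradiction with the lemmas on $S_{N(\cdot)}^{N(\cdot)}(\epsilon)$ established above. Let me set $\epsilon = 1/4$ (any small constant works) and let $C_5(t) = \max(C_2(t,\epsilon), C_3(t,\epsilon), C_4(t,\epsilon))$ so that all of Lemmas~\ref{lem_S_AB}, \ref{lem_S_dist_3}, \ref{lem_S_dist_5} are available; we may take $\delta(G)$ as large as needed.

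First I would observe that in a bipartite graph, two vertices at distance exactly $d$ lie in the same part if $d$ is even and in opposite parts if $d$ is odd. Suppose for contradiction that $\mathrm{diam}(G) \geq 6$, and pick $x_1$ and a vertex at distance $6$ from it, so along a geodesic we get a path $x_1 x_2 x_3 x_4 x_5 x_6 x_7$ with $d(x_1, x_7) = 6$. Here $x_1$ and $x_7$ are in the same part. The key point is that $d(x_1, x_7) = 6$ forces $N(x_1)$ and $N(x_7)$ to be "far apart" in a strong sense: no vertex of $N(x_1)$ is adjacent to a vertex of $N(x_7)$ except possibly through... actually more carefully, since $d(x_1,x_7)=6$, for any $a \in N(x_1)$ we have $d(a, x_7) \geq 5$, and for any $b \in N(x_7)$ we have $d(a,b) \geq 4$. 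So the bipartite graph between $N(x_1)$ and $N(x_7)$ has \emph{no edges at all}. Now I want to play this against the fact that $x_1$ and some vertex of $N(x_7)$ — or better, $x_2$ and $x_7$ — are at distance $5$, which is odd, putting them in opposite parts, so Lemma~\ref{lem_S_dist_5} applies to the length-$5$ path $x_2 x_3 x_4 x_5 x_6 x_7$ to give $|S_{N(x_2)}^{N(x_7)}(\epsilon)| \leq C_4(t,\epsilon)$.

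The heart of the argument is then a counting/contradiction step. Since $|S_{N(x_2)}^{N(x_7)}(\epsilon)| \leq C_4(t,\epsilon) < |N(x_2)| - 1$ (using $\delta(G)$ large), and $x_1 \in N(x_2)$, there is a vertex $a \in N(x_2) \setminus S_{N(x_2)}^{N(x_7)}(\epsilon)$ with $a \neq x_1$; actually I want to say more: since $x_1 \in N(x_2)$, and applying Lemma~\ref{lem_S_AB} to edge $x_1x_2$ or just directly, a positive fraction of $N(x_1)$ lies outside the relevant bad sets. The cleanest route: apply Lemma~\ref{lem_S_dist_5} to conclude that $x_1$ itself — which is in $N(x_2)$ — satisfies $d_{N(x_7)}(x_1) \geq (1-\epsilon)|N(x_7)| > 0$ \emph{unless} $x_1 \in S_{N(x_2)}^{N(x_7)}(\epsilon)$. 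But $d_{N(x_7)}(x_1) = |N(x_1) \cap N(x_7)| = 0$ since $d(x_1,x_7) = 6 > 2$. Hence $x_1 \in S_{N(x_2)}^{N(x_7)}(\epsilon)$; this alone is not yet a contradiction, so instead I would run the argument for \emph{every} neighbour of $x_2$: every $z \in N(x_2)$ has $d(z, x_7) \in \{5, 7\}$ (it's odd and at least $5$ since $d(x_2,x_7)=5$ wait $d(x_2,x_7) = 5$ so $d(z,x_7) \geq 4$, and parity forces $d(z,x_7) \geq 5$), and if $d(z,x_7) \geq 6$ then $N(z) \cap N(x_7) = \emptyset$ so $z \in S_{N(x_2)}^{N(x_7)}(\epsilon)$; if $d(z,x_7) = 5$ we learn nothing directly. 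So let me instead geodesically choose things so that \emph{many} vertices are at distance $6$ from $x_7$.

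The main obstacle is making the "essentially all of $N(x_2)$ is bad" step rigorous, so I would restructure as follows. Take $x_1, x_7$ with $d(x_1,x_7) = 6$. For the edge $x_6 x_7$, Lemma~\ref{lem_S_AB} (with $A = N(x_6)$, $B = N(x_7)$) gives $|S_{N(x_6)}^{N(x_7)}(\epsilon)| \leq C_2(t,\epsilon)$; so there is $w \in N(x_6) \cap N(x_5)$... rather, pick $w \in N(x_6)$ with $|N(w) \cap N(x_7)| \geq (1-\epsilon)|N(x_7)|$ and $w \neq x_7$ — note $d(w, x_1) \geq 5$. Similarly on the other end use edge $x_1 x_2$ to find $v \in N(x_2)$ with $|N(v) \cap N(x_1)| \geq (1-\epsilon)|N(x_1)|$, $v \neq x_1$, so $d(v, x_7) \geq 5$, hence $d(v,w) \geq 3$; in fact $d(v,w) \geq 3$ and $v, w$ in opposite parts means $d(v,w) \in \{3,5,\dots\}$. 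If $d(v,w) = 3$, apply Lemma~\ref{lem_S_dist_3} to a $v$–$w$ geodesic; if $d(v,w) = 5$ apply Lemma~\ref{lem_S_dist_5}; either way $|S_{N(v)}^{N(w)}(\epsilon)|$ is bounded by a constant, so since $|N(v) \cap N(x_1)| \geq (1-\epsilon)|N(x_1)| > C_j(t,\epsilon)$ there is $a \in N(v) \cap N(x_1)$ with $d_{N(w)}(a) \geq (1-\epsilon)|N(w)| \geq (1-\epsilon)^2 |N(x_7)| > 0$. Then pick $b \in N(a) \cap N(w)$; so $d(a,b) = 1$ while $a \in N(x_1)$ gives $d(a, x_1) = 1$ hence $d(b, x_1) \leq 3$, and $b \in N(w)$ with $|N(w)\cap N(x_7)|$ large lets us push to a vertex near $x_7$: pick $c \in N(b)$... this walks $x_1 \to a \to b \to (\text{neighbour of } w \text{ near } x_7)$ and bounds $d(x_1, x_7) \leq 5$ by concatenating the short pieces, contradicting $d(x_1,x_7) = 6$. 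So the real work — and the step I expect to be fiddliest — is bookkeeping the constants $C_2, C_3, C_4$ and the powers of $(1-\epsilon)$ so that at each stage the "bad set" is strictly smaller than the set it sits in, exactly as in the proofs of Lemmas~\ref{lem_S_dist_3} and~\ref{lem_S_dist_5}; once those inequalities are arranged, the diameter bound of $5$ drops out.
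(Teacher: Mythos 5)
The final version of your argument (the one beginning ``so I would restructure as follows'') contains a parity error that breaks the key step. You pick $v\in N(x_2)$ and $w\in N(x_6)$ and then assert that $v$ and $w$ lie in opposite parts of the bipartition, so that $d(v,w)\in\{3,5,\dots\}$ and one of Lemma~\ref{lem_S_dist_3} or~\ref{lem_S_dist_5} can be applied to a $v$--$w$ geodesic. But along the path $x_1x_2\cdots x_7$ the vertices $x_1,x_3,x_5,x_7$ all lie in one part and $x_2,x_4,x_6$ in the other, so $v\in N(x_2)$ and $w\in N(x_6)$ both lie in the \emph{same} part as $x_1$ and $x_7$. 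Hence $d(v,w)$ is even, and none of Lemmas~\ref{lem_S_AB},~\ref{lem_S_dist_3},~\ref{lem_S_dist_5} apply (they are all about $S_{N(a)}^{N(b)}$ for $a,b$ in opposite parts). For the same reason your intermediate claims $d(w,x_1)\ge 5$ and $d(v,x_7)\ge 5$ are too strong: both quantities are even and the correct lower bound is $4$. So the step that is supposed to bound $|S_{N(v)}^{N(w)}(\epsilon)|$ has no lemma to invoke, and the chain of short hops never closes.

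For comparison, the paper avoids this by working symmetrically from the \emph{midpoint} $x_3$ of a geodesic $x_0\cdots x_6$: Lemma~\ref{lem_S_dist_3} applied to the length-$3$ paths $x_3x_2x_1x_0$ and $x_3x_4x_5x_6$ shows both $S_{N(x_3)}^{N(x_0)}(1/4)$ and $S_{N(x_3)}^{N(x_6)}(1/4)$ are smaller than $|N(x_3)|/2$, so a single vertex $v\in N(x_3)$ lies outside both; then $v$ has neighbours $u\in N(x_0)$ and $w\in N(x_6)$ and $x_0uvwx_6$ is a length-$4$ walk, contradicting $d(x_0,x_6)=6$. Note that here $x_3$ is in the opposite part to $x_0$ and $x_6$, so the distance-$3$ lemma genuinely applies --- exactly the parity feature that your choice of $v,w$ fails to have. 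Your overall instinct (use the $S_{\ast}^{\ast}$ bounds at odd distances to manufacture a shortcut) is right, but you need to anchor the construction at a vertex of the opposite parity to the two endpoints, which is most cleanly achieved by taking the geodesic's midpoint.
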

\begin{proof}
Let $C_5(t)=\max(4C_3(t, 1/4), 10)$. 
Suppose for contradiction that there are vertices with $d(x_0, x_6)=6$, and let $x_0x_1x_2x_3x_4x_5x_6$ be a shortest path between them. By Lemma~\ref{lem_S_dist_3}, we have $S_{N(x_3)}^{N(x_0)}(1/4)$, $S_{N(x_3)}^{N(x_6)}(1/4)\le  C_3(t, 1/4)< N(x_3)/2$, giving a vertex $v\in N(x_3)\setminus (S_{N(x_3)}^{N(x_0)}(1/4)\cup S_{N(x_3)}^{N(x_6)}(1/4))$. By definition of $S_{\ast}^{\ast}$, we have $|N(v)\cap N(x_0)|\geq  (1-1/4)|N(x_0)|>0$ and $|N(v)\cap N(x_6)|\geq  (1-1/4)|N(x_6)|>0$ giving edges $vu$ and $vw$ with $u\in N(x_0), w\in N(x_6)$. Now $x_0 u v w x_6$ is a length $4$ path from $x_0$ to $x_6$ contradicting $d(x_0, x_6)=6$.
\end{proof}

The following lemma summarizes everything in this section.
\begin{lemma}\label{lem_S_bound_general}
There is a function $C_6(t, \epsilon)$ so that the following is true for $t\in \mathbb{N}, \epsilon\in (0,1)$.  Let $G$ be a bipartite connected graph with parts $A,B$, with $\delta(G)\geq C_6(t, \epsilon)$, and which is induced $S_{t,t}$-free. For every $a\in A, b\in B$, we have $S_{N(a)}^{N (b)}(\epsilon)\le C_6(t, \epsilon)$.
\end{lemma}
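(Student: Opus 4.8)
The plan is to reduce to the three special cases already handled, using the diameter bound from Lemma~\ref{lem_bound_diameter}. Set $C_6(t,\epsilon) = \max\{C_2(t,\epsilon),\, C_3(t,\epsilon),\, C_4(t,\epsilon),\, C_5(t)\}$; this single choice makes the minimum-degree hypotheses of Lemmas~\ref{lem_S_AB}, \ref{lem_S_dist_3}, \ref{lem_S_dist_5} and \ref{lem_bound_diameter} all hold under the assumption $\delta(G) \geq C_6(t,\epsilon)$.

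Fix $a \in A$ and $b \in B$. Since $G$ is connected and $\delta(G) \geq C_5(t)$, Lemma~\ref{lem_bound_diameter} gives $d(a,b) \leq 5$. As $a$ and $b$ lie in opposite parts of a bipartite graph, every walk between them has odd length, so $d(a,b)$ is odd, and hence $d(a,b) \in \{1,3,5\}$. Fix a shortest path $P$ from $a$ to $b$; it has length $1$, $3$, or $5$.

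Now I case on the length of $P$. If $d(a,b) = 1$, then $ba$ is an edge, and applying Lemma~\ref{lem_S_AB} to this edge with the sets $N(b) \subseteq N(b)$ and $N(a) \subseteq N(a)$ (valid since $|N(a)| \geq \delta(G) \geq C_2(t,\epsilon)$) yields $S_{N(a)}^{N(b)}(\epsilon) \leq C_2(t,\epsilon)$. If $d(a,b) = 3$, write $P = a\,x_2\,x_3\,b$ and apply Lemma~\ref{lem_S_dist_3} with $x_1 = a$, $x_4 = b$ to get $S_{N(a)}^{N(b)}(\epsilon) \leq C_3(t,\epsilon)$. If $d(a,b) = 5$, write $P = a\,x_2\,x_3\,x_4\,x_5\,b$ and apply Lemma~\ref{lem_S_dist_5} to get $S_{N(a)}^{N(b)}(\epsilon) \leq C_4(t,\epsilon)$. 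In all cases $S_{N(a)}^{N(b)}(\epsilon) \leq C_6(t,\epsilon)$, as required.

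I do not expect any real obstacle: the substantive work is all in the earlier lemmas. The only points requiring care are bookkeeping --- choosing $C_6$ large enough to dominate $C_2, C_3, C_4, C_5$ so that every invoked lemma applies --- and the parity remark that a vertex of $A$ and a vertex of $B$ are always at odd distance, which is what rules out distances $2$ and $4$ and lets the three previous lemmas cover every case.
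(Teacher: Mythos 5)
Your proof is correct and follows essentially the same route as the paper: set $C_6=\max(C_2,C_3,C_4,C_5)$, invoke Lemma~\ref{lem_bound_diameter} together with bipartite parity to get $d(a,b)\in\{1,3,5\}$, and then apply Lemma~\ref{lem_S_AB}, \ref{lem_S_dist_3}, or \ref{lem_S_dist_5} accordingly. Your write-up just spells out the casework a bit more explicitly than the paper does.
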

\begin{proof}
Let $C_6(t,\epsilon)=\max(C_2(t,\epsilon), C_3(t,\epsilon), C_4(t,\epsilon), C_5(t))$. 
Since $a,b$ are in different parts of $G$, $d(a,b)$ is odd. By 
By Lemma~\ref{lem_bound_diameter} $d(a,b)\le 5$, giving that $d(a,b)=1,3,$ or $5$. Depending on which of these happens, the lemma comes from Lemma~\ref{lem_S_AB}, \ref{lem_S_dist_3} or \ref{lem_S_dist_5}.
\end{proof}

\section{Establishing density}\label{Section_maxdeg}

For a bipartite graph $G$ with parts $X,Y$, we use $\Delta_X, \Delta_Y$ to denote the maximum degrees of vertices in $X$ and $Y$ respectively.  
\begin{definition}\label{def_U}
Let $X$ be a set of vertices of a graph $G$ and $0 < \varepsilon < 1$. Define
\[U_X(\varepsilon) = \{x\in X \mid d(x)\leq (1-\varepsilon)\Delta_X\}\]
where $\Delta_X = max\{d(x), x\in X\}$.
\end{definition}
We'll ultimately prove that $\Delta_X, \Delta_Y$ are close to $n$ (Lemma~\ref{thm_max_degree}) and that $U_X(\varepsilon), U_Y(\varepsilon)$ are almost equal to $X,Y$ (Lemma~\ref{lem_U}). First we bound degrees into these sets. In this, and other lemmas in this section $C_6(t, \epsilon)$ will always refer to the function that comes from Lemma~\ref{lem_S_bound_general}.
\begin{lemma} \label{lem_U}
There is a function $C_6(t, \epsilon)$ so that the following is true for $t\in \mathbb{N}, \epsilon\in (0,1)$. Let $G$ be a connected balanced bipartite graph with parts $X,Y$, $\delta(G)\geq C_6(t, \epsilon)$ and with no induced $S_{t,t}$. For all $x \in X$, $y \in Y$ we have $d_{U_Y(\varepsilon)}(x) \leq C_6(t, \epsilon)$ and $d_{U_X(\varepsilon)}(y) \leq C_6(t, \epsilon)$.
\end{lemma}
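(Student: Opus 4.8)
The plan is to derive both bounds from Lemma~\ref{lem_S_bound_general} by comparing absolute degrees against a fixed maximum-degree vertex on the opposite side. To bound $d_{U_Y(\epsilon)}(x)$ for a given $x\in X$, I would first fix a vertex $y_0\in Y$ with $d(y_0)=\Delta_Y$; such a vertex exists since $G$ is connected with $\delta(G)\geq C_6(t,\epsilon)$, so $Y\neq\emptyset$ and $\Delta_Y\geq 1$. The crux is the inclusion
\[ N(x)\cap U_Y(\epsilon)\ \subseteq\ S_{N(x)}^{N(y_0)}(\epsilon). \]

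Verifying this inclusion is essentially definition-chasing: if $z\in N(x)\cap U_Y(\epsilon)$ then $z\in Y$, so $N(z)\subseteq X$ and $N(y_0)\subseteq X$ give $d_{N(y_0)}(z)=|N(z)\cap N(y_0)|\leq |N(z)|=d(z)$; and $z\in U_Y(\epsilon)$ means $d(z)\leq(1-\epsilon)\Delta_Y=(1-\epsilon)|N(y_0)|$. Hence $d_{N(y_0)}(z)\leq(1-\epsilon)|N(y_0)|$, which, together with $z\in N(x)$, is exactly the condition for $z\in S_{N(x)}^{N(y_0)}(\epsilon)$. Then I would apply Lemma~\ref{lem_S_bound_general} to the pair $x\in X$, $y_0\in Y$ (its hypotheses hold: $G$ is connected bipartite, induced $S_{t,t}$-free, with $\delta(G)\geq C_6(t,\epsilon)$) to get $|S_{N(x)}^{N(y_0)}(\epsilon)|\leq C_6(t,\epsilon)$, and conclude $d_{U_Y(\epsilon)}(x)=|N(x)\cap U_Y(\epsilon)|\leq C_6(t,\epsilon)$. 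The bound $d_{U_X(\epsilon)}(y)\leq C_6(t,\epsilon)$ is the mirror image: fix $x_0\in X$ with $d(x_0)=\Delta_X$, check $N(y)\cap U_X(\epsilon)\subseteq S_{N(y)}^{N(x_0)}(\epsilon)$ by the same computation, and apply Lemma~\ref{lem_S_bound_general}, which bounds $S_{N(a)}^{N(b)}(\epsilon)$ for any $a,b$ in opposite parts and so applies with the two parts in either role.

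I do not expect a genuine obstacle here: the entire content is the observation that ``small absolute degree into $Y$'' forces ``small relative degree into $N(y_0)$'', after which Lemma~\ref{lem_S_bound_general} does all the work. The only point deserving a moment's attention is that $|N(y_0)|$ equals $\Delta_Y$ on the nose, so the same $\epsilon$ can be used throughout with no multiplicative loss. It is also worth noting that $x$ and $y_0$ need not be adjacent — this is fine precisely because Lemma~\ref{lem_S_bound_general} (via the diameter bound of Lemma~\ref{lem_bound_diameter}) already controls $S_{N(a)}^{N(b)}(\epsilon)$ for \emph{all} $a\in X$, $b\in Y$, not merely for edges as in Lemma~\ref{lem_S_AB}.
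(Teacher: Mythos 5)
Your proof is correct and matches the paper's argument exactly: both fix a maximum-degree vertex $y'$ on the opposite side, verify the inclusion $N(x)\cap U_Y(\varepsilon)\subseteq S_{N(x)}^{N(y')}(\varepsilon)$ by the same degree comparison, and then invoke Lemma~\ref{lem_S_bound_general}.
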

\begin{proof}
We'll just prove $d_{U_Y(\varepsilon)}(x) \leq C_6(t, \epsilon)$ --- and$d_{U_X(\varepsilon)}(y) \leq C_6(t, \epsilon)$ can be proved by exchanging all instances of $X$ and $Y$. 
Let $y' \in Y$ such that $d(y') = \Delta_Y$. For all $x \in X$, we have
\[N(x) \cap U_Y(\varepsilon) \subseteq S_{N(x)}^{N(y')}(\varepsilon).\]
This is because if $v \in N(x) \cap U_Y(\varepsilon)$, then $v \in N(x)$ and $d_{N(y')}(v) \leq d(v) \leq (1-\varepsilon)\Delta_Y =  (1-\varepsilon)d(y')$ i.e. $d_{N(y')}(v) \leq (1-\varepsilon)|N(y')|$, so $v \in S_{N(x)}^{N(y')}(\varepsilon)$. Thus, by Lemma \ref{lem_S_bound_general},
\[d_{U_Y(\varepsilon)}(x) = |N(x) \cap U_Y(\varepsilon)| \leq |S_{N(x)}^{N(y')}(\varepsilon)| \leq C_6(t, \epsilon).\]
\end{proof}
We next show that these sets $U_{X}(\varepsilon), U_{Y}(\varepsilon)$ are small.
\begin{lemma}\label{size_UX}
There is a function $C_6(t, \epsilon)$ so that the following is true for $t\in \mathbb{N}, \epsilon\in (0,1)$. 
Let $G$ be a connected balanced bipartite graph with parts $X,Y$ or order $n$, $\delta(G)\geq C_6(t, \epsilon)$ and with no induced $S_{t,t}$. 
Then 
    \[|U_{X}(\varepsilon)|, |U_{Y}(\varepsilon)| \leq \frac{C_6(t, \epsilon)}{\delta(G)}n.\] 
\end{lemma}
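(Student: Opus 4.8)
The plan is a one-line double-counting argument on the edges between $X$ and the set $U_Y(\varepsilon)$, using Lemma~\ref{lem_U} as the only non-trivial input. First I would bound $e(X, U_Y(\varepsilon))$ from below: since $G$ is bipartite, every vertex of $U_Y(\varepsilon)\subseteq Y$ has all of its neighbours in $X$, and $\delta(G)\geq C_6(t,\epsilon)$ guarantees each such vertex has at least $\delta(G)$ neighbours, so $e(X, U_Y(\varepsilon))\geq \delta(G)\,|U_Y(\varepsilon)|$. Next I would bound the same quantity from above: Lemma~\ref{lem_U} (which applies precisely because $\delta(G)\geq C_6(t,\epsilon)$ and $G$ has no induced $S_{t,t}$) gives $d_{U_Y(\varepsilon)}(x)\leq C_6(t,\epsilon)$ for every $x\in X$, and summing over the $n$ vertices of $X$ yields $e(X, U_Y(\varepsilon))\leq C_6(t,\epsilon)\,n$. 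Comparing the two inequalities gives $\delta(G)\,|U_Y(\varepsilon)|\leq C_6(t,\epsilon)\,n$, i.e. $|U_Y(\varepsilon)|\leq \frac{C_6(t,\epsilon)}{\delta(G)}\,n$. The bound on $|U_X(\varepsilon)|$ follows by the identical argument with the roles of $X$ and $Y$ exchanged (using the second conclusion $d_{U_X(\varepsilon)}(y)\leq C_6(t,\epsilon)$ of Lemma~\ref{lem_U}).

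I do not expect any genuine obstacle: all the structural work has already been done in Lemma~\ref{lem_U}, and this statement merely converts a degree bound into a size bound by averaging. The only points requiring a modicum of care are purely bookkeeping — namely, that $\delta(G)$ counts exactly the minimum degree so the lower bound $\delta(G)\,|U_Y(\varepsilon)|$ is valid, that the hypotheses of Lemma~\ref{lem_U} are met verbatim here (same $C_6(t,\epsilon)$, same $S_{t,t}$-freeness, same connectedness and balance), and that we use $|X|=|Y|=n$ from the ``balanced'' assumption. With those in place the proof is two inequalities and a division.
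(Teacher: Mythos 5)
Your proposal is correct and is essentially the same double-counting argument as the paper's: both bound the number of edges between one part and the $U$-set of the other part from below by $\delta(G)\,|U|$ and from above by $C_6(t,\epsilon)\,n$ via Lemma~\ref{lem_U}, then divide. The only cosmetic difference is that you count $e(X,U_Y(\varepsilon))$ first where the paper counts $e(U_X(\varepsilon),Y)$ first; the two halves are symmetric either way.
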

\begin{proof}
    By Lemma \ref{lem_U}, we have  \[e(U_X(\varepsilon), Y) = \sum_{y \in Y} d_{U_X(\varepsilon)}(y) \leq C_6(t, \epsilon)|Y| = C_6(t, \epsilon)n.\]
    On the other hand,
    \[e(U_X(\varepsilon), Y) = \sum_{u \in U_X(\varepsilon)} d(u) \geq \delta(G)|U_X(\varepsilon)|.\]
Combine both inequalities, we obtain the desired result. The proof for $|U_{Y}(\varepsilon)|$ is the same by exchanging the roles of $X$ and $Y$.
\end{proof}

Next we study common neighbourhoods of vertices.
\begin{lemma}\label{lem_common_neighbour_vertex}
There is a function $C_6(t, \epsilon)$ so that the following is true for $t\in \mathbb{N}, \epsilon\in (0,1)$. 
Let $G$ be a connected balanced bipartite graph with parts $X,Y$ or order $n$, $\delta(G)\geq 3C_6(t, \epsilon)$ and with no induced $S_{t,t}$. For any vertices $a, b$ in the same part of $G$, there is a vertex $c$ in the same part such $|N(a)\setminus N (c)|\leq \epsilon|N(a)|$, $|N(b)\setminus N (c)|\le \epsilon|N(b)|$.
\end{lemma}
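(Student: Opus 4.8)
The plan is to avoid any case analysis on $d(a,b)$ and instead let a single well-chosen neighbour of $a$ do all the work. Let $X$ be the part containing $a$ and $b$, and let $Y$ be the other part. Since $\delta(G)\geq 3C_6(t,\epsilon)>0$, the vertex $a$ has a neighbour; fix any $w\in N(a)\subseteq Y$. The key observation is that $w$ and $a$ are an edge, while $w$ and $b$ are simply a pair of vertices in opposite parts of the connected, induced $S_{t,t}$-free graph $G$, so Lemma~\ref{lem_S_bound_general} applies to \emph{both} pairs $\{w,a\}$ and $\{w,b\}$ without caring how far apart $a$ and $b$ are.

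Concretely, I would first apply Lemma~\ref{lem_S_bound_general} to the pair $w,a$ to get $|S_{N(w)}^{N(a)}(\epsilon)|\leq C_6(t,\epsilon)$, and to the pair $w,b$ to get $|S_{N(w)}^{N(b)}(\epsilon)|\leq C_6(t,\epsilon)$. Then, since $|N(w)|=d(w)\geq \delta(G)\geq 3C_6(t,\epsilon)>2C_6(t,\epsilon)$, the set $N(w)\setminus\big(S_{N(w)}^{N(a)}(\epsilon)\cup S_{N(w)}^{N(b)}(\epsilon)\big)$ is non-empty; pick a vertex $c$ in it. Finally, $c\in N(w)\subseteq X$ lies in the same part as $a$ and $b$, and by the defining property of $S_{\ast}^{\ast}$ (i.e. because $c$ lies outside both bad sets) we have $d_{N(a)}(c)>(1-\epsilon)|N(a)|$ and $d_{N(b)}(c)>(1-\epsilon)|N(b)|$, which rearranges to $|N(a)\setminus N(c)|\le\epsilon|N(a)|$ and $|N(b)\setminus N(c)|\le\epsilon|N(b)|$, as desired.

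There is no real obstacle here once the auxiliary vertex $w$ is identified; the only point of care is bookkeeping the two error terms, which together cost $2C_6(t,\epsilon)$ — this is exactly why the hypothesis is $\delta(G)\geq 3C_6(t,\epsilon)$ rather than $\delta(G)\geq C_6(t,\epsilon)$. A tempting but unnecessary alternative would be to invoke Lemma~\ref{lem_bound_diameter}, split into the cases $d(a,b)\in\{0,2,4\}$, and construct $c$ along a shortest $a$–$b$ path; the argument above sidesteps this, since a single neighbour $w$ of $a$ suffices and Lemma~\ref{lem_S_bound_general} already covers arbitrary opposite-part pairs $\{w,b\}$.
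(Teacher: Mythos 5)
Your proof is correct and is essentially the paper's proof: pick a vertex in the opposite part, apply Lemma~\ref{lem_S_bound_general} twice to get two small bad sets in its neighbourhood, and choose $c$ outside both. The only cosmetic difference is that you insist on $w\in N(a)$ and flag the edge $wa$ as a ``key observation,'' whereas the paper simply takes an \emph{arbitrary} vertex $v$ in the opposite part --- Lemma~\ref{lem_S_bound_general} already handles arbitrary opposite-part pairs (that is its whole point), so the edge plays no role and your constraint on $w$ is unnecessary, though harmless.
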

\begin{proof}
Let $v$ be an arbitrary vertex in the opposite part to $a,b$. By Lemma~\ref{lem_S_bound_general}, we have $|S_{N(v)}^{N(a)}|, |S_{N(v)}^{N(b)}|\le C_6(t, \epsilon)$ giving at least $|N(v)|-|S_{N(v)}^{N(a)}|-|S_{N(v)}^{N(b)}|\geq C_6(t, \epsilon)$ choices of a vertex $c$ outside both of these sets. By definition of $S_{\ast}^{\ast}$, we have $|N(a)\cap  N (c)|\geq (1-\epsilon)|N(a)|$, $|N(b)\cap  N (c)|\ge (1-\epsilon)|N(b)|$ which implies the lemma.
\end{proof}

Applying the above to two vertices outside $U_X(\epsilon)$ gives the following.
\begin{lemma}\label{lemma_common_nhood}
There is a function $C_6(t, \epsilon)$ so that the following is true for $t\in \mathbb{N}, \epsilon\in (0,1)$. 
Let $G$ be a connected balanced bipartite graph with parts $X,Y$ or order $n$, $\delta(G)\geq 3C_6(t, \epsilon)$ and with no induced $S_{t,t}$.
For any vertices  $x,x'\not\in U_X(\epsilon)$, we have $|N(x)\cap N(x')|\geq (1-10\epsilon) \Delta_X$. 
\end{lemma}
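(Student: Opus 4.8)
The plan is to route everything through Lemma~\ref{lem_common_neighbour_vertex}, which already hands us a vertex that is ``almost adjacent'' to everything $x$ and $x'$ see. First I would apply Lemma~\ref{lem_common_neighbour_vertex} with $a=x$, $b=x'$ (note that its hypothesis $\delta(G)\ge 3C_6(t,\epsilon)$ is precisely the hypothesis we are given) to obtain a vertex $c\in X$ with $|N(x)\setminus N(c)|\le \epsilon|N(x)|$ and $|N(x')\setminus N(c)|\le \epsilon|N(x')|$; equivalently $|N(x)\cap N(c)|\ge (1-\epsilon)|N(x)|$ and $|N(x')\cap N(c)|\ge (1-\epsilon)|N(x')|$.

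Next I would feed in the hypothesis $x,x'\notin U_X(\epsilon)$: by Definition~\ref{def_U} this means $d(x), d(x')>(1-\epsilon)\Delta_X$, so the two previous bounds upgrade to $|N(x)\cap N(c)|\ge (1-\epsilon)^2\Delta_X\ge (1-2\epsilon)\Delta_X$ and likewise $|N(x')\cap N(c)|\ge (1-2\epsilon)\Delta_X$. The key observation is that $N(x)\cap N(c)$ and $N(x')\cap N(c)$ are both subsets of $N(c)$, and since $c\in X$ we have $|N(c)|\le \Delta_X$; so two subsets of a set of size at most $\Delta_X$, each of size at least $(1-2\epsilon)\Delta_X$, must overlap in almost all of it. Concretely, inclusion--exclusion inside $N(c)$ gives
\[
|N(x)\cap N(x')\cap N(c)|\;\ge\; |N(x)\cap N(c)|+|N(x')\cap N(c)|-|N(c)|\;\ge\; 2(1-2\epsilon)\Delta_X-\Delta_X=(1-4\epsilon)\Delta_X,
\]
and hence $|N(x)\cap N(x')|\ge (1-4\epsilon)\Delta_X\ge (1-10\epsilon)\Delta_X$, which is in fact stronger than what is claimed.

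There is no real obstacle in this step: all the structural content sits in Lemma~\ref{lem_common_neighbour_vertex}, and what remains is the elementary fact that two large subsets of a bounded set have a large intersection. The only thing to watch is the bookkeeping of the $\epsilon$-factors — the gap between the $(1-4\epsilon)$ one actually obtains and the stated $(1-10\epsilon)$ is harmless slack, kept so that the constant survives any cruder estimates needed in the later applications of this lemma.
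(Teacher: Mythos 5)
Your proof is correct and follows essentially the same path as the paper's: both invoke Lemma~\ref{lem_common_neighbour_vertex} to get the almost-common-neighbour vertex $c$, use $x,x'\notin U_X(\epsilon)$ to say $|N(x)|,|N(x')|$ are close to $\Delta_X$, and then finish with inclusion--exclusion against the constraint $|N(c)|\le\Delta_X$. The only difference is presentational --- you argue directly (bounding $|N(x)\cap N(x')\cap N(c)|$ from below), whereas the paper argues by contradiction (assuming the intersection is small and deriving $|N(c)|>\Delta_X$); this also lets you land on the slightly sharper constant $(1-4\epsilon)\Delta_X$, which of course still implies the claimed $(1-10\epsilon)\Delta_X$.
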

\begin{proof}
Suppose for contradiction that $|N(x)\cap N(x')|<(1-10\epsilon) \Delta_X$. 
Since  $x,x'\not\in U_X(\epsilon)$, we have $|N(x)|, |N(x')|\geq (1-\epsilon)\Delta_X$. By the inclusion-exclusion principle, we have 
$$|N(x)\cup N(x')|\geq |N(x)|+|N(x')|-|N(x)\cap  N(x')|\geq (1-\epsilon)\Delta_X+(1-\epsilon)\Delta_X- (1-10\epsilon) \Delta_X=(1+8\epsilon)\Delta_X.$$  By Lemma~\ref{lem_common_neighbour_vertex}, there is a vertex $c$ with $|N(x)\setminus N (c)|\leq \epsilon|N(x)|\le \epsilon\Delta_X$, $|N(x')\setminus N (c)|\le \epsilon|N(x')|\le \epsilon\Delta_X$. This gives $|N (c)|\geq |N(x)\cup N(x')|-|N(x)\setminus N (c)|-|N(x')\setminus N (c)|\geq (1+8\epsilon)\Delta_X-\epsilon\Delta_X-\epsilon\Delta_X>\Delta_X$, which is a contradiction. 
\end{proof}

We now prove that the maximum degree in the graph is large.
\begin{lemma}[Maximum degree lemma]\label{thm_max_degree}
There is a function $C_6(t, \epsilon)$ so that the following is true for $t\in \mathbb{N}, \epsilon\in (0,1)$. 
Let $G$ be a connected balanced bipartite graph with parts $X,Y$ or order $n$, $\delta(G)\geq 3C_6(t, \epsilon)$ and with no induced $S_{t,t}$.
 We have \[\Delta_X, \Delta_Y \geq \left(1-\frac{C_6(t, \epsilon)}{\delta(G)}\right)(1-10\varepsilon)n\].
\end{lemma}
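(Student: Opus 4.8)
The plan is to lower-bound $\Delta_X$ (the bound for $\Delta_Y$ being symmetric). Fix a vertex $x^\ast\in X$ with $d(x^\ast)=\Delta_X$; its neighbourhood $N(x^\ast)\subseteq Y$ has size $\Delta_X$. The idea is to count edges from $X\setminus U_X(\varepsilon)$ into $N(x^\ast)$: each such vertex $x$ satisfies $x\notin U_X(\varepsilon)$, so by Lemma~\ref{lemma_common_nhood} applied to $x$ and $x^\ast$ (note $x^\ast\notin U_X(\varepsilon)$ trivially), we get $d_{N(x^\ast)}(x)=|N(x)\cap N(x^\ast)|\geq(1-10\varepsilon)\Delta_X$. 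Hence
\[
e(X\setminus U_X(\varepsilon),\,N(x^\ast))\;\geq\;|X\setminus U_X(\varepsilon)|\,(1-10\varepsilon)\Delta_X.
\]

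On the other hand, every vertex $y\in N(x^\ast)\subseteq Y$ has $d(y)\leq \Delta_Y\le n$ — but more usefully, $d_X(y)\le n$ trivially, so $e(X\setminus U_X(\varepsilon), N(x^\ast)) = \sum_{y\in N(x^\ast)} d_{X\setminus U_X(\varepsilon)}(y)\le |N(x^\ast)|\cdot n = \Delta_X n$. Combining the two inequalities and dividing by $\Delta_X$ (which is positive since $\delta(G)\ge 3C_6(t,\epsilon)>0$) yields $|X\setminus U_X(\varepsilon)|\,(1-10\varepsilon)\le n$, i.e.
\[
|X\setminus U_X(\varepsilon)|\;\le\;\frac{n}{1-10\varepsilon}.
\]
Hmm — this goes the wrong way. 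The better route is to instead observe that $|X\setminus U_X(\varepsilon)| = n - |U_X(\varepsilon)| \geq n - \frac{C_6(t,\epsilon)}{\delta(G)}n = \left(1-\frac{C_6(t,\epsilon)}{\delta(G)}\right)n$ by Lemma~\ref{size_UX}. So I want an \emph{upper} bound on $e(X\setminus U_X(\varepsilon), N(x^\ast))$ that involves $\Delta_X$ but NOT $n$ as the dominant term. The clean way: pick any single vertex $x\in X\setminus U_X(\varepsilon)$ (nonempty since $\delta(G)$ is large so $U_X(\varepsilon)\ne X$), and just use $d(x)\le \Delta_X$ together with Lemma~\ref{lemma_common_nhood}? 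That only gives one vertex. Instead, count $e(X\setminus U_X(\varepsilon), N(x^\ast))$ from the $Y$-side using Lemma~\ref{lem_U}-type control is not available here.

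The correct counting is this. Each $y\in N(x^\ast)$ has all its neighbours in $X$, so $d_{X\setminus U_X(\varepsilon)}(y)\le d(y)$; but I want to bound $\sum_{y\in N(x^\ast)} d_{X\setminus U_X(\varepsilon)}(y)$ by something like $\Delta_X\cdot\Delta_Y$ and that's too weak. So instead flip the roles: count $e(N(x^\ast),\, X\setminus U_X(\varepsilon))$ from the $X$-side as $\sum_{x\in X\setminus U_X(\varepsilon)}|N(x)\cap N(x^\ast)|\ge |X\setminus U_X(\varepsilon)|(1-10\varepsilon)\Delta_X$ as before, and from the $Y$-side bound it by $\sum_{y\in N(x^\ast)}d_X(y)$. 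Now here is the key point I was missing: $d_X(y)\le n$ always, so this is $\le |N(x^\ast)|n=\Delta_X n$, giving $|X\setminus U_X(\varepsilon)|(1-10\varepsilon)\Delta_X\le \Delta_X n$, i.e. $\Delta_X\ge \frac{|X\setminus U_X(\varepsilon)|(1-10\varepsilon)\Delta_X}{n}$ — circular again. The resolution: don't cancel $\Delta_X$; rather rearrange to $n\ge |X\setminus U_X(\varepsilon)|(1-10\varepsilon)$, which combined with $|X\setminus U_X(\varepsilon)|\ge(1-C_6/\delta)n$ forces $(1-C_6/\delta)(1-10\varepsilon)\le 1$ — vacuous.

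I therefore suspect the intended argument is the reverse: fix $y^\ast\in Y$ with $d(y^\ast)=\Delta_Y$, so $|N(y^\ast)|=\Delta_Y$, $N(y^\ast)\subseteq X$. For $x\in N(y^\ast)\setminus U_X(\varepsilon)$ and any fixed $x_0\in X\setminus U_X(\varepsilon)$, Lemma~\ref{lemma_common_nhood} gives $|N(x)\cap N(x_0)|\ge(1-10\varepsilon)\Delta_X$, so in particular $N(x_0)\subseteq Y$ meets $N(x)$ in $\ge(1-10\varepsilon)\Delta_X$ vertices — but we want a lower bound on $\Delta_X = |N(x_0)|$ itself when $x_0$ is a max-degree vertex. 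Take $x_0$ with $d(x_0)=\Delta_X$. For \emph{any} $x\in X\setminus U_X(\varepsilon)$, $|N(x)|\ge |N(x)\cap N(x_0)|\ge(1-10\varepsilon)\Delta_X$ — trivial. The actual content must come from a \emph{third} vertex or a union argument, exactly as in Lemma~\ref{lemma_common_nhood}'s proof: the union of two near-full neighbourhoods would exceed $\Delta_X$. So: the main obstacle, and the crux of the real proof, is to show $\Delta_X$ is large by a \emph{covering} argument — every vertex of $Y\setminus U_Y(\varepsilon)$ is a neighbour of almost every vertex of $X\setminus U_X(\varepsilon)$ (by the symmetric version of Lemma~\ref{lemma_common_nhood}, or by Lemma~\ref{lem_common_neighbour_vertex} applied across), so a fixed $x\in X\setminus U_X(\varepsilon)$ has $N(x)\supseteq Y\setminus U_Y(\varepsilon)$ minus an error, whence $\Delta_X\ge d(x)\ge |Y\setminus U_Y(\varepsilon)| - (\text{error}) \ge \left(1-\frac{C_6(t,\epsilon)}{\delta(G)}\right)n - (\text{error})$; tracking the error through Lemma~\ref{lem_common_neighbour_vertex} (which costs an $\varepsilon|N(\cdot)|$ slack twice, and the per-vertex failure set of size $\le C_6$) and absorbing it into the $(1-10\varepsilon)$ factor gives the claimed bound. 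I would carry this out by fixing $x\in X\setminus U_X(\varepsilon)$, and for each $y\in Y\setminus U_Y(\varepsilon)$ arguing $xy\in E(G)$ up to a bounded exceptional set, using Lemma~\ref{lem_S_bound_general} to control the exceptions, then summing. The hard part is bookkeeping the three separate $\varepsilon$-losses so they fit inside $(1-10\varepsilon)$.
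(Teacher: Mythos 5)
Your first double-count is exactly the paper's argument, and you abandon it one step too early. Where you write the upper bound $e(X\setminus U_X(\varepsilon),\,N(x^\ast))\le |N(x^\ast)|\cdot n=\Delta_X n$, replace $n$ with $\Delta_Y$: for each $y\in N(x^\ast)$ we have $d_{X\setminus U_X(\varepsilon)}(y)\le d(y)\le\Delta_Y$, so
\[
e\bigl(X\setminus U_X(\varepsilon),\,N(x^\ast)\bigr)\;\le\;|N(x^\ast)|\,\Delta_Y\;=\;\Delta_X\Delta_Y.
\]
Combined with your lower bound $e\ge |X\setminus U_X(\varepsilon)|\,(1-10\varepsilon)\Delta_X \ge \left(1-\tfrac{C_6(t,\epsilon)}{\delta(G)}\right)n\,(1-10\varepsilon)\Delta_X$ (using Lemma~\ref{size_UX} as you do), cancelling $\Delta_X>0$ gives
\[
\Delta_Y \;\ge\; \left(1-\frac{C_6(t,\epsilon)}{\delta(G)}\right)(1-10\varepsilon)\,n.
\]
The point you missed is that this double count does \emph{not} directly bound $\Delta_X$ --- it bounds $\Delta_Y$. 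That is why every attempt to cancel and solve for $\Delta_X$ felt circular. Once you accept that the computation controls the \emph{other} side's maximum degree, the $\Delta_X$ bound follows immediately by the symmetric argument with a max-degree vertex $y^\ast\in Y$. Your subsequent speculation about a ``covering argument'' tracking $\varepsilon$-losses through Lemma~\ref{lem_common_neighbour_vertex} is not needed and is not what the paper does; the proof really is just the single double count, read correctly, plus symmetry.
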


\begin{proof}
We'll just prove the inequality for $\Delta_Y$, and the one for $\Delta_X$ can be done by exchanging the roles of $X$ and $Y$. 
Let $v \in U_X^c(\varepsilon)$ (for example take $v$ to be a vertex of degree $\Delta_X$). We have
\begin{align*}
    e(U_X^c(\varepsilon), N(v)) & = \sum_{x \in U_X^c(\varepsilon)} d_{N(v)}(x)\ge |U_X^c(\varepsilon)|(1-10\varepsilon)\Delta_X
    \ge\left(n-\frac{C_6(t, \epsilon)}{\delta(G)}n\right)(1-10\varepsilon)\Delta_X
\end{align*}
Here the first inequality if from Lemma~\ref{lemma_common_nhood}, while the second one is by Lemma~\ref{size_UX}. 
 Now for an upper bound,
\[e(U_X^c(\varepsilon), N(v)) = \sum_{y \in N(v)} d_{U_X^c(\varepsilon))}(y) \leq |N(v)|\Delta_Y \leq \Delta_X\Delta_Y \]
Combining both inequalities we obtain the desired result.
\end{proof}

We summarize everything in this section by the following which is slightly stronger than Theorem~\ref{Theorem_dense}

\begin{lemma} \label{lem_density}
Let $\alpha, \epsilon>0, t\in \mathbb{N}$, and let $d$ be sufficiently large. Let $G$ be a connected balanced bipartite graph with parts $X,Y$ or order $n$, $\delta(G)\geq d$ and with no induced $S_{t,t}$. Then $|U_X(\alpha)|, |U_Y(\alpha)|\leq \epsilon n$ and vertices outside these sets have degree $\ge (1-20\epsilon)n$.
\end{lemma}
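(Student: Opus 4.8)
The plan is to assemble Lemma~\ref{lem_density} directly from the three key results already established in this section: Lemma~\ref{size_UX} (which bounds $|U_X(\alpha)|,|U_Y(\alpha)|$), Lemma~\ref{thm_max_degree} (which gives $\Delta_X,\Delta_Y$ close to $n$), and Definition~\ref{def_U} (which says that vertices outside $U_X(\alpha)$ have degree $\ge (1-\alpha)\Delta_X$). The only subtlety is quantifier management: these earlier lemmas are stated with respect to a parameter $\epsilon$ (both as the accuracy of the $S$-sets and as the threshold in the $U$-sets), whereas the target statement uses a separate $\alpha$ for the $U$-sets and $\epsilon$ for the final bounds. So the first move is to fix an appropriate small parameter, apply the section's lemmas with that parameter, and then choose $d$ large enough to absorb the $C_6(t,\cdot)/\delta(G)$ factors.

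Concretely, I would proceed as follows. First, set $\epsilon' = \min(\alpha, \epsilon/2)$ (or any convenient constant at most both $\alpha$ and $\epsilon$, also small enough that $10\epsilon' \le \epsilon$ would work just as well depending on which constants one wants to track), and let $d$ be large enough that $d \ge 3C_6(t,\epsilon')$ and $C_6(t,\epsilon')/d \le \epsilon$; this is possible since $C_6(t,\epsilon')$ depends only on $t$ and $\epsilon'$, hence only on $t,\alpha,\epsilon$. Second, apply Lemma~\ref{size_UX} with parameter $\epsilon'$: since $\alpha \ge \epsilon'$ we have $U_X(\alpha) \subseteq U_X(\epsilon')$, so $|U_X(\alpha)| \le |U_X(\epsilon')| \le \frac{C_6(t,\epsilon')}{\delta(G)}n \le \epsilon n$, and likewise for $Y$; this gives the first claim. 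Third, apply Lemma~\ref{thm_max_degree} with parameter $\epsilon'$ to get $\Delta_X,\Delta_Y \ge (1 - \frac{C_6(t,\epsilon')}{\delta(G)})(1-10\epsilon')n \ge (1-\epsilon)(1-10\epsilon')n$. Fourth, for any $x \in X \setminus U_X(\alpha)$, Definition~\ref{def_U} gives $d(x) \ge (1-\alpha)\Delta_X \ge (1-\alpha)(1-\epsilon)(1-10\epsilon')n$, and multiplying out the three factors $(1-\alpha),(1-\epsilon),(1-10\epsilon')$ and using $\epsilon' \le \epsilon$, $\alpha$ small yields a bound of the form $(1-20\epsilon)n$ after a crude estimate (here one just needs the elementary inequality $(1-a)(1-b)(1-c) \ge 1-a-b-c$); the symmetric argument handles $y \in Y \setminus U_Y(\alpha)$.

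I do not expect any genuine obstacle here — this lemma is purely a bookkeeping consolidation of the section's work, and the statement even flags this by saying it is ``slightly stronger than Theorem~\ref{Theorem_dense}'' (stronger because it additionally controls the size of the exceptional sets, not just $e(G)$). The one place to be a little careful is making sure the constant $20$ in $(1-20\epsilon)n$ is actually valid given the $10\epsilon$ loss coming from Lemma~\ref{thm_max_degree} and Lemma~\ref{lemma_common_nhood}: one should either run the argument with $\epsilon'$ chosen so that $10\epsilon' \le 5\epsilon$, say, leaving room for the $\alpha$ and the $C_6/\delta$ losses, or simply remark that the numerical constant is not important and any fixed multiple of $\epsilon$ suffices (which is how it is used downstream in Section~\ref{Section_Hamiltonian}). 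To recover Theorem~\ref{Theorem_dense} itself from this lemma, one then notes $e(G) \ge \sum_{x \in X \setminus U_X(\alpha)} d(x) \ge (n - \epsilon n)(1-20\epsilon)n \ge (1 - 21\epsilon)n^2$, and rescales $\epsilon$.
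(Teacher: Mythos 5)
Your approach matches the paper's own proof: combine Lemma~\ref{size_UX} (size of $U$-sets) with Lemma~\ref{thm_max_degree} (max-degree bound) and absorb the $C_6(t,\cdot)/\delta(G)$ factors by choosing $d$ large as a function of $t,\alpha,\epsilon$. In fact you are slightly more careful than the paper, whose written proof silently replaces $\alpha$ by $\epsilon$ throughout (i.e.\ it only treats the case $\alpha=\epsilon$), whereas your observation that $U_X(\alpha)\subseteq U_X(\epsilon')$ when $\alpha\ge\epsilon'$ handles general $\alpha$ for the size bound; you correctly flag that the degree bound $d(x)>(1-\alpha)\Delta_X$ for $x\notin U_X(\alpha)$ only yields $(1-20\epsilon)n$ when $\alpha$ is small relative to $\epsilon$, an assumption the paper makes implicitly as well.
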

\begin{proof}
Let $d=\epsilon^{-1}4C_6(t, \epsilon)$. By Lemma~\ref{thm_max_degree} we have $\Delta_X, \Delta_Y\geq (1-\frac{C_6(t, \epsilon)}{d})(1-10\epsilon)n\geq (1-\frac{C_6(t, \epsilon/40)}{\epsilon^{-1}4C_6(t, \epsilon)})(1-10\epsilon)n= (1-\epsilon/4)(1-10\epsilon)n\geq (1-11\epsilon)n$.
By Lemma~\ref{size_UX}, we have $|U_X(\epsilon)|, |U_Y(\epsilon)|\le \frac{C_6(t, \epsilon)}{\delta(G)}n\le \frac{C_6(t, \epsilon)}{\epsilon^{-1}4C_6(t, \epsilon)}n=\epsilon n/4$. All vertices outside these sets have degrees $\geq (1-\epsilon)\Delta_X\geq (1-\epsilon)(1-11\epsilon)n\geq (1-20\epsilon)n$ or $\geq (1-\epsilon/2)\Delta_Y\geq (1-\epsilon)(1-11\epsilon)n\geq (1-20\epsilon)n$ as required.
\end{proof}

\section{Hamiltonicity}\label{Section_Hamiltonian}
Here we prove Hamiltonicity of biclaw-free graphs. It is easy to find a very long cycle using Lemma~\ref{lem_density}. However to get a Hamilton cycle we also need to cover vertices outside $U_X(\epsilon), U_Y(\epsilon)$. For this we use the following lemma which guarantees edges leaving $U_X(\epsilon), U_Y(\epsilon)$.
\begin{lemma} \label{matching_lemma}
There is a function $C_6(t, \epsilon)$ so that the following is true for $t\in \mathbb{N}, \epsilon\in (0,1)$. 
Let $G$ be a connected balanced bipartite graph with parts $X,Y$ or order $n$, $\delta(G)\geq 3C_6(t, \epsilon)$ and with no induced $S_{t,t}$.
 \begin{itemize}
 \item Let $U_{X}(\varepsilon) = \{u_1,\dots,u_m\}$. There exists a set of distinct vertices $\{v_i^-, v_i^+: i=1, \dots, m\}\subseteq U_{Y}^c(\varepsilon)$ such that for all $i$ we have $u_iv_i^-, u_iv_i^+$ edges.
  \item Let $U_{Y}(\varepsilon) = \{v_1,\dots,v_m\}$. There exists a set of distinct vertices $\{u_i^-, u_i^+: i=1, \dots, m\}\subseteq U_{X}^c(\varepsilon)$ such that for all $i$ we have $v_iu_i^-, v_iu_i^+$ edges.
 \end{itemize}
\end{lemma}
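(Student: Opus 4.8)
The plan is to prove both bullets via Hall's theorem applied to a suitably ``doubled'' auxiliary bipartite graph; by symmetry (swapping the roles of $X$ and $Y$, which is legitimate since Lemma~\ref{lem_U} is symmetric in $X,Y$) it suffices to prove the first bullet. Write $U=U_X(\varepsilon)$ and $W=U_Y^c(\varepsilon)$, and form the bipartite graph $H$ whose left part $L$ consists of two copies $u^{(1)},u^{(2)}$ of each $u\in U$, whose right part is $W$, and in which both copies of $u$ are joined to $N_G(u)\cap W$. A perfect matching of $L$ into $W$ in $H$ is exactly a choice of the desired family $\{v_i^-,v_i^+:i=1,\dots,m\}$ of distinct vertices of $U_Y^c(\varepsilon)$, so by Hall's theorem it is enough to check that $|N_H(S)|\ge |S|$ for every $S\subseteq L$.

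If $T\subseteq U$ denotes the set of vertices having at least one copy in $S$, then $N_H(S)=N_G(T)\cap W$ and $|S|\le 2|T|$, so it suffices to prove the clean statement
$$|N_G(T)\cap U_Y^c(\varepsilon)|\ \ge\ 2|T|\qquad\text{for every }T\subseteq U_X(\varepsilon).$$
This I would establish by double counting the edges between $T$ and $U_Y^c(\varepsilon)$. On the one hand, each $u\in T\subseteq U_X(\varepsilon)$ has $d_{U_Y(\varepsilon)}(u)\le C_6(t,\varepsilon)$ by Lemma~\ref{lem_U} and $d(u)\ge\delta(G)\ge 3C_6(t,\varepsilon)$, hence at least $2C_6(t,\varepsilon)$ neighbours in $U_Y^c(\varepsilon)$; summing over $T$ gives $e(T,U_Y^c(\varepsilon))\ge 2C_6(t,\varepsilon)|T|$. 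On the other hand, every $y\in Y$ has $d_{U_X(\varepsilon)}(y)\le C_6(t,\varepsilon)$, again by Lemma~\ref{lem_U}, so each of the $|N_G(T)\cap U_Y^c(\varepsilon)|$ vertices receiving an edge from $T$ receives at most $C_6(t,\varepsilon)$ of them, giving $e(T,U_Y^c(\varepsilon))\le C_6(t,\varepsilon)\,|N_G(T)\cap U_Y^c(\varepsilon)|$. Comparing the two bounds and cancelling the positive factor $C_6(t,\varepsilon)$ yields the displayed inequality, and hence $|N_H(S)|\ge 2|T|\ge|S|$.

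There is no serious obstacle here: the whole argument is a one-line double count feeding into Hall's theorem, and the only points to get right are the bookkeeping — that ``two distinct neighbours per vertex, all pairwise disjoint'' is precisely Hall's condition $|N_H(S)|\ge|S|$ on the doubled graph — and the observation that it is exactly the extra slack $\delta(G)\ge 3C_6(t,\varepsilon)$ (rather than merely $\ge C_6(t,\varepsilon)$) that upgrades ``$u$ has a neighbour outside $U_Y(\varepsilon)$'' to ``$u$ has at least $2C_6(t,\varepsilon)$ neighbours outside $U_Y(\varepsilon)$'', which is what makes the ratio in the double count come out to the factor $2$ we need. The case $T=\emptyset$ (equivalently $m=0$) is vacuous, and the second bullet is obtained by running the identical argument with $X$ and $Y$ interchanged.
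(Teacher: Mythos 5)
Your proof is correct and takes essentially the same approach as the paper: both double each vertex of $U_X(\varepsilon)$, verify Hall's condition, and obtain it from a double count of edges into $U_Y^c(\varepsilon)$ using the two degree bounds from Lemma~\ref{lem_U}. The only cosmetic difference is that you reduce Hall's condition to the cleaner statement $|N_G(T)\cap U_Y^c(\varepsilon)|\ge 2|T|$ before double-counting, whereas the paper counts edges from the doubled set $S$ directly and lets the factor $2$ appear as $d_{V'}(y)=2d_{U_X(\varepsilon)}(y)$; the underlying computation is identical.
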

\begin{proof}
We'll just prove the first bullet point --- the second one follows from the same proof, exchanging the roles of $X$ and $Y$. 
Construct $V'$ with two copies of each vertex in $U_{X}(\varepsilon)$ i.e. $V' = \{u_i^+, u_i^- \mid N(u_i) = N(u_i^+) = N(u_i^-) \text{ and } i=1, \dots ,m.\}$. If we can show there is a matching $M = \{u_i^+v_i^+, u_i^-v_i^- \mid v_i^{\pm} \in U_{Y}^c(\varepsilon)\}$ i.e.  a matching from $V'$ to $U_{Y}^c(\varepsilon)$ covering $V'$, then we are done. By Hall's theorem, it suffices to show for any $S \subseteq V'$, we have $|N(S) \cap U_{Y}^c(\varepsilon)| \geq |S|$. We have
\begin{align*}
    e(S, U_{Y}^c(\varepsilon) \cap N(S)) & = \sum_{y \in N(S) \cap U_{Y}^c(\varepsilon)} d_{S}(y) 
     \leq \sum_{y \in N(S) \cap U_{Y}^c(\varepsilon)} d_{V'}(y) 
    = \sum_{y \in N(S) \cap U_{Y}^c(\varepsilon)} 2d_{U_{X}(\varepsilon)}(y)  \\
   &\leq 2C_6(t, \epsilon)|N(S) \cap U_{Y}^c(\varepsilon)|.
\end{align*}
Here the first inequality uses $S\subseteq V'$, and the second one comes from Lemma \ref{lem_U}. 
We also have,
\[e(S, U_{Y}^c(\varepsilon) \cap N(S)) = e(S, U_{Y}^c(\varepsilon))= \sum_{x \in S} d_{U_{Y}^c(\varepsilon)}(x) = \sum_{x \in S} (d(x) - d_{U_{Y}(\varepsilon)}(x))\]
Again by Lemma \ref{lem_U},
\[e(S, U_{Y}^c(\varepsilon) \cap N(S)) \geq (\delta(G) - C_6(t, \epsilon))|S|.\]
Hence, combining both inequalities we get,
   \[|U_{Y}^c(\varepsilon) \cap N(S)| \geq \frac{(\delta(G) - C_6(t, \epsilon))}{2C_6(t, \epsilon)}|S| \geq |S|\]
\end{proof}
Exchanging the roles of $X$ and $Y$ we also have:

We'll use the following standard bipartite analogue of Dirac's Theorem.
\begin{theorem}[Moon and Moser, \cite{moon1963hamiltonian}]\label{MoonMoser}
Let $G$ be a bipartite graph with parts of order $n$ and $\delta(G)> n/2$. Then $G$ is Hamiltonian.
\end{theorem}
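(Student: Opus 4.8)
The plan is to prove Theorem~\ref{MoonMoser} by the classical edge-maximality (``closure'') argument adapted to the bipartite setting, combined with a crossing-path exchange in the spirit of P\'osa's rotation trick. Throughout I would assume $n \ge 2$, the case $n = 1$ being degenerate.

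First I would reduce to an edge-maximal counterexample. Assuming for contradiction that some bipartite $G$ with parts $X, Y$ of size $n$ and $\delta(G) > n/2$ is not Hamiltonian, I would repeatedly add edges between $X$ and $Y$ while keeping the graph non-Hamiltonian; this only raises degrees, so $\delta > n/2$ persists, and the process halts at a graph $G'$ (same vertex set and bipartition) that is non-Hamiltonian but with $G' + xy$ Hamiltonian for every non-edge $xy$, $x \in X$, $y \in Y$. Since $K_{n,n}$ is Hamiltonian for $n \ge 2$, such a non-edge exists; I would rename $G'$ as $G$ and fix this pair $x, y$.

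Next I would extract a Hamilton path with prescribed endpoints. Any Hamilton cycle of $G + xy$ must use the edge $xy$ (else it already lies in $G$), so deleting $xy$ from it yields a Hamilton path $x = v_0 v_1 \cdots v_{2n-1} = y$ of $G$; since $G$ is bipartite and the path alternates sides, $v_i \in X$ exactly when $i$ is even, whence $N(x) \subseteq \{v_1, v_3, \dots, v_{2n-1}\}$ and $N(y) \subseteq \{v_0, v_2, \dots, v_{2n-2}\}$. The key exchange is that if some index $i$ satisfies $xv_i \in E(G)$ and $v_{i-1}y \in E(G)$, then $v_0 v_1 \cdots v_{i-1}\, v_{2n-1} v_{2n-2} \cdots v_i\, v_0$ is a Hamilton cycle of $G$, contradicting the choice of $G$; so no such $i$ exists. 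I would then count inside the $n$-element set of odd indices $\{1, 3, \dots, 2n-1\}$: the set $I = \{i : xv_i \in E(G)\}$ has size $d(x) > n/2$, since all neighbours of $x$ occur as $v_i$ with $i$ odd, and the set $J = \{i \text{ odd} : v_{i-1}y \in E(G)\}$ has size $d(y) > n/2$, since the shift $i \mapsto i-1$ is a bijection from the odd indices onto the even indices $\{0, 2, \dots, 2n-2\} \supseteq N(y)$. Hence $|I| + |J| > n$, which forces $I \cap J \ne \emptyset$ and produces the forbidden crossing index.

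The main obstacle is purely the bipartite parity bookkeeping: I must verify that the exchanged closed walk really is a single cycle spanning all $2n$ vertices, that $I$ and $J$ genuinely lie in the \emph{same} $n$-element ground set of odd indices so that the pigeonhole step applies, and that the excluded non-edge $xy$ is precisely what rules out the degenerate endpoints $i \in \{1, 2n-1\}$. Once the indexing is set up correctly, the strict hypothesis $\delta(G) > n/2$ closes the argument by pigeonhole with no further computation.
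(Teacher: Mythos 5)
The paper does not prove this statement --- it is quoted as a known theorem of Moon and Moser --- so there is no internal proof to compare against. Your argument is a correct, self-contained proof by the classical method (bipartite closure plus the Ore/P\'osa crossing-pair count): the parity bookkeeping works out, the indices $i=1$ and $i=2n-1$ are indeed excluded because both reduce to the non-edge $xy$, and $|I|+|J|>n$ inside the $n$-element set of odd indices forces the forbidden crossing. The only caveat, which you already flag, is that the statement needs $n\ge 2$ (for $n=1$ the hypothesis gives $K_{1,1}$, which has no Hamilton cycle); this is harmless for the paper's application, where $n$ is large.
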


We now prove our main result. 
\begin{proof}[Proof of Theorem \ref{Theorem_main}]
Let $G$ be bipartite into $X$ and $Y$ with $|X|=|Y|=n$. Pick $\delta(G)$ large enough so that $|U_X(0.01), U_Y(0.01)|\leq 0.0001 n$, and so that vertices outside these sets have degree $\ge 0.9n$ (which is possible by  Lemma~\ref{lem_density}), and also so that Lemmas~\ref{matching_lemma} applies.
\begin{claim}\label{Lemma_one_short_path}
Let $u,v\not\in U_X(0.01)\cup  U_Y(0.01)$, and $S\subseteq V(G)\setminus \{u,v\}$, with $|S|\leq 0.01n$. Then there is a path from $u$ to $v$ of length $\leq 3$ which doesn't use any vertices of $S$.
\end{claim}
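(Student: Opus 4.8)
The plan is to find the short path between $u$ and $v$ by the same ``pick a good intermediate vertex'' strategy used throughout Section~\ref{Section_maxdeg}, but now keeping track of the small forbidden set $S$. First I would separate into cases according to whether $u$ and $v$ lie in the same part of $G$ or in opposite parts. If $u\in X$ and $v\in Y$ (opposite parts): both have degree $\ge 0.9n$ by choice of $\delta(G)$, so $|N(u)|, |N(v)| \ge 0.9n$ and thus $|N(u)\cap N(v)| \ge 0.8n$. Removing the $\le 0.01n$ vertices of $S$ still leaves a common neighbour $w\notin S$, and then $u\,w\,v$ is a path of length $2$ avoiding $S$. (If $u=v$ there is nothing to do, and if $uv\in E(G)$ we may just take the edge; the length-$\le 3$ slack is generous.)

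If $u$ and $v$ lie in the \emph{same} part, say both in $X$: here I would use Lemma~\ref{lemma_common_nhood}. Since $u,v\notin U_X(0.01)$, that lemma gives $|N(u)\cap N(v)| \ge (1-10\cdot 0.01)\Delta_X = 0.9\Delta_X$, and by Lemma~\ref{thm_max_degree} (via Lemma~\ref{lem_density}) $\Delta_X \ge 0.9n$, so $|N(u)\cap N(v)| \ge 0.81n$. We want a path $u\,w\,z\,v$ of length $3$, so we need $w\in N(u)$, $z\in N(v)$, $wz\in E(G)$, and $w,z\notin S$. Pick $w\in N(u)\cap N(v) \setminus (S \cup U_Y(0.01))$: this is possible because $|N(u)\cap N(v)| \ge 0.81n$ while $|S| + |U_Y(0.01)| \le 0.01n + 0.0001n < 0.81n$. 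Since $w\notin U_Y(0.01)$, it has degree $\ge 0.9n$, in particular $|N(w)| \ge 0.9n$; also $|N(v)| \ge 0.9n$ (as $v\notin U_X(0.01)$), so $|N(w)\cap N(v)| \ge 0.8n$, and we may pick $z\in N(w)\cap N(v)\setminus (S\cup\{u,v\})$ since $0.8n > 0.01n + 2$. Then $u\,w\,z\,v$ is the required path of length $3$ avoiding $S$ (one checks $w\ne v$ since $w\in N(u)$ is in $Y$ while $v\in X$, and $z\ne u$ by the explicit exclusion). The symmetric case $u,v\in Y$ is identical with the roles of $X$ and $Y$ swapped, using the $\Delta_Y$ bound.

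The bookkeeping I should be slightly careful about is that $u,v$ are required to lie outside \emph{both} $U_X(0.01)$ \emph{and} $U_Y(0.01)$, so in particular in the same-part case $u,v\in X$ I should note $u,v\notin U_X(0.01)$ is the relevant hypothesis and similarly $w\notin U_Y(0.01)$ is what I arrange by hand; the degree lower bound ``$\ge 0.9n$ for vertices outside these sets'' from the setup of the proof of Theorem~\ref{Theorem_main} then applies to $u$, $v$, and $w$ uniformly. I do not expect any real obstacle here — everything is a counting argument with very comfortable constants — the only thing to get right is ensuring each chosen intermediate vertex simultaneously avoids $S$, avoids the endpoints, and (where needed) lies outside the relevant $U$-set, all of which fit inside the $0.01n$ vs.\ $0.8n$ gap.
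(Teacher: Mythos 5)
Your proposal has the two cases swapped, and this is a genuine logical error, not just a bookkeeping slip. In a bipartite graph with parts $X,Y$, vertices in \emph{opposite} parts have \emph{disjoint} neighbourhoods, while vertices in the \emph{same} part can share neighbours. So: in your opposite-parts case ($u\in X$, $v\in Y$) you write $|N(u)\cap N(v)|\geq 0.8n$, but $N(u)\subseteq Y$ and $N(v)\subseteq X$ are disjoint, so $N(u)\cap N(v)=\emptyset$; no path $u\,w\,v$ of length $2$ can exist. Conversely, in your same-part case ($u,v\in X$) the set $N(u)\cap N(v)$ really is large, and once you choose $w\in N(u)\cap N(v)\setminus S$ you already have a length-$2$ path $u\,w\,v$ and should stop; instead you tack on a vertex $z$ and try to use $wz\in E(G)$, which is impossible because both $w$ and $z$ lie in $Y$. (The inclusion-exclusion estimate $|N(w)\cap N(v)|\geq 0.8n$ there is also vacuously wrong for the same reason: $N(w)\subseteq X$, $N(v)\subseteq Y$.)

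The fix is just to exchange which argument is attached to which case, and then both are fine: the same-part case is the easy length-$2$ common-neighbour argument, and the opposite-parts case is the length-$3$ argument where you first pick $w\in N(u)\setminus(S\cup U_Y(0.01))$, note $w\notin U_Y(0.01)$ so $|N(w)|\geq 0.9n$, and then pick $z\in N(w)\cap N(v)\setminus(S\cup\{u\})$ (both $N(w),N(v)\subseteq X$, so the intersection is at least $0.8n$). This is essentially the paper's proof; and in fact your instinct to insist on $w\notin U_Y(0.01)$ when choosing the first intermediate vertex is exactly what is needed to make the degree bound $|N(w)|\geq 0.9n$ legitimate --- the paper's own write-up of the opposite-parts case takes $u'\in N(u)\setminus S$ without explicitly excluding $U_Y(0.01)$, which is a small oversight, so after swapping the cases your version would actually be slightly cleaner. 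As written, though, the proposal is incorrect.
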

\begin{proof}
Then there are three cases depending on where $u$ and $v$ are:

Suppose that $u,v\in X$.  We have $|N(u)\setminus S|\geq |N(u)|-|S|\geq 0.9n-0.01n=0.89n$. Similarly $|N(v)\setminus S|\geq 0.89n$, which implies that $(N(u)\setminus S)\cap(N(u)\setminus S)\neq \emptyset$ (since both of these sets are contained in $Y$ which has size $n$). Pick any $z\in (N(u)\setminus S)\cap(N(u)\setminus S)$, and let $P=uzv$ to get a path from $u$ to $v$ of length $2$ avoiding $S$.

Suppose that $u,v\in Y$. This is the same as the previous paragraph, exchanging the roles of $X$ and $Y$.

Suppose that $u\in X$ and $v\in Y$. As before, we have that $|N(u)\setminus S|\geq 0.89n$. Let $u'\in N(u)\setminus S$, and set $S'=S\cup \{u\}$. As above, we get $|N(u')\setminus S'|\ge |N(u')|- |S'| \geq 0.88n$, and similarly $|N(v)\setminus S'|\geq 0.88n$.  This again gives us a vertex $z\in (N(u)\setminus S)\cap(N(u)\setminus S)$. Now the path $P=uu'zv$ is from $u$ to $v$, has length $3$ and avoids $S$.
\end{proof}
Using this we can cover $U_{X}(0.01), U_{Y}(0.01)$.
\begin{claim} \label{lem_path}
There is a path $P$ covering $U_{X}(0.01) \cup U_{Y}(0.01)$    starting in $U_{Y}^c(0.01)$ and ending in $U_{X}^c(0.01)$, such that $|P| \leq 0.01n$.
\end{claim}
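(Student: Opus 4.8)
The plan is to build the path greedily by handling the vertices of $U := U_X(0.01)\cup U_Y(0.01)$ one at a time, using Lemma~\ref{matching_lemma} to attach ``good'' endpoints to each bad vertex and using Claim~\ref{Lemma_one_short_path} to stitch consecutive pieces together. Concretely, write $U_X(0.01)=\{u_1,\dots,u_k\}$ and $U_Y(0.01)=\{v_1,\dots,v_\ell\}$, and let $m=k+\ell=|U|$. Apply Lemma~\ref{matching_lemma} to get, for each $u_i$, two distinct vertices $v_i^-,v_i^+\in U_Y^c(0.01)$ with $u_iv_i^-,u_iv_i^+\in E(G)$, and for each $v_j$ two distinct vertices $u_j^-,u_j^+\in U_X^c(0.01)$ with $v_ju_j^-,v_ju_j^+\in E(G)$; moreover these $2m$ ``handle'' vertices can be taken all distinct (the two bullet points of Lemma~\ref{matching_lemma} give distinctness within each part, and the two sets of handles lie in opposite parts $Y$ and $X$, hence are automatically disjoint). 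Thus for each bad vertex $w\in U$ we have a length-$2$ path $p_w^- \, w \, p_w^+$ with both $p_w^\pm$ in the ``good'' set $G^{\mathrm{good}}:=U_X^c(0.01)\cup U_Y^c(0.01)$, and across all $w\in U$ these $2m$ good endpoints are pairwise distinct.

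Next I would concatenate these $m$ short paths into one path by connecting the ``$+$'' endpoint of the $r$-th piece to the ``$-$'' endpoint of the $(r+1)$-st piece. Order the bad vertices arbitrarily as $w_1,\dots,w_m$; after placing pieces $w_1,\dots,w_r$ we have built a path $P_r$ whose current last vertex is $p_{w_r}^+\in G^{\mathrm{good}}$, and we want to extend it by a short path to $p_{w_{r+1}}^-\in G^{\mathrm{good}}$ and then traverse $p_{w_{r+1}}^- \, w_{r+1}\, p_{w_{r+1}}^+$. To apply Claim~\ref{Lemma_one_short_path} to the pair $(p_{w_r}^+, p_{w_{r+1}}^-)$ I need both to lie outside $U_X(0.01)\cup U_Y(0.01)$ — true by construction — and I need the forbidden set $S$ (all vertices used so far, plus the remaining handles and bad vertices $w_{r+1},\dots,w_m$ that must not be swallowed prematurely) to have size $\le 0.01n$. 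Since $|U|\le 0.0002n$ and each connecting step adds at most $3$ new vertices, after $m\le 0.0002n$ bad vertices the total number of vertices ever used is at most $m(2+3)=5m\le 0.001n\le 0.01n$, so Claim~\ref{Lemma_one_short_path} always applies. This also gives the length bound: $|P|\le 5m\le 0.001n\le 0.01n$.

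There is one subtlety to dispatch: when I invoke Claim~\ref{Lemma_one_short_path} I must forbid not just the already-used vertices but also the yet-unused handles $p_{w_s}^\pm$ and yet-unused bad vertices $w_s$ for $s>r+1$, so that the short connecting path does not accidentally reuse them; all of these fit inside the $0.01n$ budget by the same count. The path constructed starts at $p_{w_1}^-$ and ends at $p_{w_m}^+$, both of which are good, but I want it to start in $U_Y^c(0.01)$ and end in $U_X^c(0.01)$ specifically. This is easily arranged: if $p_{w_1}^-\notin U_Y^c(0.01)$ (i.e.\ it lies in $U_X^c(0.01)$ but is on the $X$ side), prepend one more edge from $p_{w_1}^-$ to some neighbour in $U_Y^c(0.01)$ — such a neighbour exists since $p_{w_1}^-$ has degree $\ge 0.9n$ and $|U_Y(0.01)|$ is tiny — and symmetrically at the other end; this adds at most $2$ vertices, still within budget. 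The main obstacle here is purely bookkeeping — keeping all the handle vertices distinct from each other and from the connecting paths, and verifying the forbidden set never exceeds $0.01n$ — rather than anything structural; the structural content is entirely contained in Lemma~\ref{matching_lemma} and Claim~\ref{Lemma_one_short_path}.
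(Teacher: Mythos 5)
Your proposal is correct and follows essentially the same route as the paper: attach two good ``handle'' vertices to each bad vertex via Lemma~\ref{matching_lemma}, then link the pieces with Claim~\ref{Lemma_one_short_path} while maintaining a forbidden set of size at most $5m\le 0.01n$. The only cosmetic difference is in enforcing the start/end-part condition: the paper pre-picks two extra good vertices $u_0^+\in U_Y^c(0.01)$ and $u_{m+1}^-\in U_X^c(0.01)$ before building any links, whereas you fix it up afterward by possibly prepending/appending one edge; both fit the budget and are equally valid.
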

\begin{proof}
Let  $U_{X}(0.01)\cup U_Y=\{u_1, \dots, u_m\}$, noting $m\le 0.0002$. 
From Lemma~\ref{matching_lemma} we have a set of distinct vertices $\{u_i^-, u_i^+: i=1, \dots, m\}\subseteq U_{X}^c(0.01)\cup U_{Y}^c(0.01)$. such that for all $i$ we have $u_iu_i^-, u_iu_i^+$ edges. Noting $|\{u_i^-, u_i^+: i=1, \dots, m\}|< U_{X}^c(0.01), U_{Y}^c(0.01)$, pick some $u_0^+\in U_{Y}^c(0.01)$, $u_{m+1}^-\in U_{X}^c(0.01)$, outside $\{u_i^-, u_i^+: i=1, \dots, m\}$. 
We will construct disjoint paths $P_1, \dots, P_{m-1}$ of length $\leq 3$ with $P_i$ going from $u_i^+$ to $u_{i+1}^-$. We do this one by one for $i=0, \dots, m$. Having already constructed $P_0, \dots, P_{i-1}$, we build $P_i$ by applying Lemma~\ref{Lemma_one_short_path} with $u=u_{i}^+, v=u_{i+1}^-$, $S=\bigcup_{j=1}^{i-1}V(P_i)\cup \bigcup_{j={i+1}}^{m}\{u_j^-, u_j^+\}$ (noting that this has $|S|\leq 5m\leq 0.01n$). Now the following is a path satisfying the lemma 
$$P:=u_0^+P_0u_1^-u_1u_1^+P_1u_2^-u_2u_2^+P_2u_3^-u_3u_3^+P_3\dots u_{m-1}^-u_{m-1}u_{m-1}^+P_{m-1}u_m^-u_mu_m^+P_m u_{m+1}^-.$$
Finally, we have $|P| \leq 5m \leq 0.01n$
\end{proof}

Let $H = G \setminus P$.
Consider some $v\in V(H)$. Since $v\not\in P$ and $U_X, U_Y\subseteq V(P)$, we have $d_H(v)\geq d_G(v)-|V(P)|\geq 0.9n-0.01n\geq 0.6n$ i.e. we have $\delta(H)\geq 0.6n$. Also note that since $P$ is a path in a bipartite graph from $X$ to $Y$, so $|P\cap X| = |P\cap Y|$. Thus, $H$ is a balanced bipartite graph.     Now by Theorem~\ref{MoonMoser}, $H$ has a Hamiltonian cycle $C$. Note $|C|= 2n-|P|\geq 1.99n$ and so $|C|/2-0.1n\geq 3|C|/8$.

Let the starting vertex of $P$ be $y' \in U_{Y}^c(0.01)$ and the ending vertex of $P$ be $x' \in U_{X}^c(0.01)$. 
\begin{claim} \label{prop_la}
    There exists $u \in N(x')$, $v \in N(y')$ such that $uv \in E(C)$.
\end{claim}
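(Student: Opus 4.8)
The claim asks for an edge $uv$ of the Hamilton cycle $C$ of $H = G \setminus P$ such that $u \in N(x')$ and $v \in N(y')$, where $x' \in U_X^c(0.01)$ is the end of $P$ and $y' \in U_Y^c(0.01)$ is the start of $P$. The point of such an edge is to splice $P$ into $C$: deleting $uv$ from $C$ leaves a Hamilton path of $H$ from $u$ to $v$, and then $u\,x' \dots (\text{path } P) \dots y'\, v$ together with this path closes up into a Hamilton cycle of $G$. So this is the key combinatorial step of the whole Hamiltonicity argument.

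**Plan.** The plan is a counting argument on the cycle $C$. Let me think of $C$ as a cyclic sequence of its $2n - |P| \geq 1.99n$ vertices. I want to find consecutive vertices $u, v$ on $C$ with $u \in N(x')$ and $v \in N(y')$. Note $x' \in X$ (it is an endpoint of $P$ lying in $U_X^c$), so $N(x') \subseteq Y$, and similarly $y' \in Y$ so $N(y') \subseteq X$. First I would estimate how many vertices of $C$ lie in $N(x')$: since $x' \notin U_X(0.01)$ we have $d_G(x') \geq 0.9n$, and at most $|P| \leq 0.01n$ of these neighbours are removed when passing to $H$, so $|N(x') \cap V(C)| \geq 0.89n$. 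Likewise $|N(y') \cap V(C)| \geq 0.89n$. Now walk around $C$: say an edge $e = uv$ of $C$ is "good" if (in the appropriate orientation) $u \in N(x')$ and $v \in N(y')$. Since $C$ is a bipartite cycle, its vertices alternate between $X$ and $Y$; consider the edges of $C$ oriented so that they go from a vertex in $Y$ to a vertex in $X$ (there are $|C|/2$ such edges, each determined by one $X$-vertex and its successor). Each such oriented edge $(v, u)$ with $v \in X, u \in Y$ fails to be good iff either $u \notin N(x')$ or $v \notin N(y')$.

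**The counting.** Among the $|C|/2 \geq 0.99n$ vertices of $C$ in $Y$, at most $n - 0.89n = 0.11n$ of them fail to lie in $N(x')$, so at most $0.11n$ of the oriented edges fail the first condition. Among the $|C|/2$ vertices of $C$ in $X$, at most $0.11n$ fail to lie in $N(y')$, so at most $0.11n$ oriented edges fail the second. Hence at most $0.22n$ of the $\geq 0.99n$ oriented $Y\!\to\!X$ edges are not good, leaving at least $0.77n > 0$ good edges — in particular one exists, giving $u \in N(x') \cap Y$ adjacent on $C$ to $v \in N(y') \cap X$ with $uv \in E(C)$, as required. (If the parity convention makes $x'$ and $y'$ land the other way — i.e. one wants $u \in X, v \in Y$ — the argument is identical after swapping the roles of $X$ and $Y$; the earlier steps of the proof guarantee $x' \in X$ and $y' \in Y$, so in fact $N(x') \subseteq Y$ and $N(y') \subseteq X$ and the orientation above is the correct one.)

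**Main obstacle.** There is no real obstacle; the only thing to be careful about is bookkeeping of which part each of $x', y'$ lies in and hence which orientation of the cycle edges to count, and the bound $|P| \leq 0.01n$ together with $d_G(x'), d_G(y') \geq 0.9n$ (both already established) must be invoked to guarantee the neighbourhoods still dominate $V(C)$ after deleting $P$. The slack is enormous ($0.77n$ good edges), so even crude estimates suffice. The slightly delicate conceptual point — not needed for this claim but for the sentence after it — is that a good edge of $C$ can actually be used to splice in $P$ while keeping the result balanced bipartite; that follows because $|P \cap X| = |P \cap Y|$ was already noted, so $C$ plus $P$ reassembled is again balanced.
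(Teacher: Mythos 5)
Your proof is correct and takes essentially the same approach as the paper: both pair up the $X$-vertices and $Y$-vertices of $C$ (the paper via a perfect matching contained in $E(C)$, you via a cyclic orientation --- the same object, since the two perfect matchings of an even cycle are precisely its two classes of consistently oriented edges) and then count pairs whose endpoints land in $N(x')$ and $N(y')$ respectively, noting there is far more than enough slack. A trivial notational slip (you announce orienting edges from $Y$ to $X$ but then write $(v,u)$ with $v\in X$, $u\in Y$) does not affect the argument, and your extra subtraction of $|P|$ before complementing is unnecessary but harmless.
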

\begin{proof}[Proof of Proposition \ref{prop_la}]
        Since $C$ is a Hamiltonian cycle of $H$, its edges admit a perfect matching, say $M = \{x_1y_1,\dots ,x_{|C|/2}y_{|C|/2} \mid x_iy_i \in E(C)\}$.  Since $y'\not\in U_{Y}(0.01)$, we know that $d(y')\geq 0.9n$ and so  $y'$ is connected to $\geq |C|/2-0.1n\geq 3|C|/8$ of the vertices   $x_1,\dots ,x_{|C|/2}$. Similarly $x'$ is connected to $\geq |C|/2-0.1n\geq 3|C|/8$ of the vertices   $y_1,\dots ,y_{|C|/2}$. Thus there is some index $i$ with $x'y_i$ and $y'x_i$ both edges (and so $x_iy_i$ satisfies the lemma).     
    \end{proof}

Now writing $C$ as $C = c_1, \dots, uv, \dots, c_1$, the sequence 
$c_1, \dots, u,P,v, \dots, c_1$ gives a Hamiltonian cycle in $G$.
\end{proof}

	\bibliography{biclaw}
	\bibliographystyle{abbrv}
\end{document}